\theoremstyle{plain}
\newtheorem{theorem}{Theorem}[section]
\newtheorem{cor}{Corollary}[theorem]
\newtheorem{lemma}{Lemma}[section]
\theoremstyle{definition}
\newtheorem{definition}{Definition}[section]
\newtheorem{remark}{Remark}[section]
\newtheorem{example}{Example}[theorem]
\begin{document}

\title[On two extremum problems related to operator norm]{On two extremum problems related to the norm of a bounded linear operator}
\author[  Debmalya Sain,  Kallol Paul and Kalidas Mandal ]{  Debmalya Sain, Kallol Paul and Kalidas Mandal}

\newcommand{\acr}{\newline\indent}

\address[Sain]{Department of Mathematics\\ Indian Institute of Science\\ Bengaluru 560012\\ Karnataka \\India\\ }
\email{saindebmalya@gmail.com}

\address[Paul]{Department of Mathematics\\ Jadavpur University\\ Kolkata 700032\\ West Bengal\\ INDIA}
\email{kalloldada@gmail.com}

\address[Mandal]{Department of Mathematics\\ Jadavpur University\\ Kolkata 700032\\ West Bengal\\ INDIA}
\email{kalidas.mandal14@gmail.com}

\thanks{Research of Dr. Debmalya Sain is sponsored by Dr. D. S. Kothari Post-doctoral Fellowship under the mentorship of Professor Gadadhar Misra. Dr. Debmalya Sain feels elated to lovingly acknowledge the inspiring presence of his childhood friend Mr. Arijeet Mukherjee, in every sphere of his life! The research of Prof Kallol Paul  is supported by project MATRICS  of DST, Govt. of India. The third author would like to thank CSIR, Govt. of India for the financial support in the form of junior research fellowship. } 

\subjclass[2010]{Primary 47L05, Secondary 46B20;46C15}
\keywords{Orthogonality; linear operators; norm attainment}

\begin{abstract}
We explore the norm attainment set and the minimum norm attainment set of a bounded linear operator between Hilbert spaces and Banach spaces. Indeed, we obtain a complete characterization of both the sets, separately for operators between Hilbert spaces and Banach spaces. We also study the interconnection between these two sets and prove that for operators between Hilbert spaces, these two sets are either equal or mutually orthogonal, provided both of them are non-empty. We also obtain separate complete characterizations of reflexive Banach spaces and Euclidean spaces in terms of the norm (minimum norm) attainment set, in order to illustrate the importance of our study.
\end{abstract}

\maketitle
\section{Introduction.} 

The purpose of the present paper is to explore the norm attainment set and the minimum norm attainment set of a bounded linear operator between Hilbert (Banach) spaces. We would like to remark that such a study was initiated by Carvajal and Neves \cite{C,Ca}, for bounded linear operators between complex Hilbert spaces. However, our study has little intersection with theirs, and, moreover, we also explore the two problems for bounded linear operators between Banach spaces. Without further ado, let us discuss the notations and the terminologies relevant to our study.\\
 
 Let $\mathbb{X},~ \mathbb{Y}$ be normed spaces over the field $\mathbb{K}$, real or complex. We reserve the symbol $ \mathbb{H} $ for Hilbert spaces. Finite-dimensional real Hilbert spaces are called Euclidean spaces. Let $B_{\mathbb{X}} = \{x \in \mathbb{X} \colon \|x\| \leq 1\}$ and $S_{\mathbb{X}} = \{x \in \mathbb{X} \colon \|x\|=1\}$ be the unit ball and the unit sphere of $\mathbb{X}$, respectively. Let $\mathbb{X}^*$ denote the  dual space of $\mathbb{X}$. For any $f\in \mathbb{X}^*$, let $ker~f$ denote the null space of $f$. Given any $x\in S_{\mathbb{X}}$, a functional $\psi_x \in S_{\mathbb{X}^*}$ is said to be a support functional at $x$ if $\|\psi_x\|=1= \psi_x (x).$ It is easy to observe that the Hahn-Banach theorem guarantees the existence of at least one support functional at each point of $ S_{\mathbb{X}}. $ We say that $\mathbb{X}$ is smooth if given any $x\in S_{\mathbb{X}}$, there exists a unique support functional at $x$. $ \mathbb{X} $ is said to be strictly convex if for any $ x,y \in \mathbb{X},~ \| x+y \| = \| x \| + \| y \| $ implies that $ y = kx, $ for some $ k \geq 0. $ Next, we give the definition of semi-inner-products \cite{G,L} in normed spaces.
\begin{definition}
	Let $\mathbb{X}$ be a normed space. A function [ , ]: $\mathbb{X}\times \mathbb{X} \rightarrow \mathbb{K}(=\mathbb{C},\mathbb{R})$ is a semi-inner-product if and only if for any $\lambda \in \mathbb{K}$ and for any $x,y,z \in \mathbb{X}$, it satisfies the following properties:\\
	(i) $[x+y , z] = [x , z] +[y , z]$,\\
	(ii) $[\lambda x ,y] = \lambda [x , y]$,\\
	(iii) $[x , x]>0$, whenever $x\neq 0$,\\
	(iv) $|[x , y]|^2\leq [x , x][y , y]$,\\
	(v) $[x , \lambda y] = \overline \lambda [x , y]$.
\end{definition}

 Let $\mathbb{L}(\mathbb{X},\mathbb{Y})$ denote the normed space of all bounded linear operators from $\mathbb{X}$ to $\mathbb{Y}$, endowed with the usual operator norm. We write $\mathbb{L}(\mathbb{\mathbb{X}, \mathbb{Y}})= \mathbb{L}(\mathbb{X})$ if $\mathbb{X}= \mathbb{Y}$.

For any two elements  $x,y \in {\mathbb{X}}$, $x$ is said to be Birkhoff-James orthogonal to $y$ \cite{B}, written as $x \perp_B y$, if $ \|x+\lambda y\|\geq\|x\|$ for all
$ \lambda \in \mathbb{K} (=\mathbb{C},\mathbb{R}). $

For a bounded linear operator $T$ defined on a normed space $\mathbb{X}$, let $M_T$ denote the norm attainment set of $ T $, i.e., $ M_T $ is the collection of all unit vectors in $\mathbb{X}$ at which $T$ attains norm. To be more precise, 
\[ M_T= \{  x \in S_ \mathbb{X} \colon\|Tx\| = \|T\| \}. \]

Following similar motivations, we define the minimum norm attainment set $ m_T, $ for a bounded linear operator  $T$ defined on a normed space $ \mathbb{X}, $ in the following way: 
\[ m_T= \{  x \in S_ \mathbb{X} \colon\|Tx\| = m(T) \}, \] where, $ m(T)= inf \{ \|Tx\| \colon \|x\|=1 \}. $

For any two elements $ x, y $ in a real normed space $ \mathbb{X}, $ following \cite{Sd} we say that $ y \in x^{+} $ if $ \| x + \lambda y \| \geq \| x \| $ for all $ \lambda \geq 0. $ Similarly, we say that $ y \in x^{-} $ if $ \| x + \lambda y \| \geq \| x \| $ for all $ \lambda \leq 0. $ Let $x^{\perp}=\{y\in \mathbb{X}\colon x\perp_B y\}.$ These notions have been extended to complex normed linear spaces by Paul et.al. \cite{PSMM} in the following way: Let  $ x \in \mathbb{X}$ and $ U = \{ \alpha \in \mathbb{C} : | \alpha | = 1, ~\arg \alpha \in [0,\pi) \}.$ For $ \alpha \in U $ define 

\[ x_\alpha^{+}=\{y \in\mathbb{X}: \|x+\lambda y\|\geq\|x\|~ for ~all~ \lambda = t\alpha,  t\geq 0 \}, \]
\[ x_\alpha^{-}=\{y \in\mathbb{X}:\|x+\lambda y\|\geq\|x\|~ for~all~ \lambda = t\alpha,  t\leq 0 \},\] 
\[ x_\alpha^{\perp}  =  \{y \in\mathbb{X}:\|x+\lambda y\|\geq\|x\|~ for~all~ \lambda = t\alpha,   t\in\mathbb{R}\}.\]
If $ \beta = e^{i\pi} \alpha $ then we define $ x_\beta^{+} = x_\alpha^{-}, ~ x_\beta^{-} = x_\alpha^{+} $, $x_\beta^{\perp} = x_\alpha^{\perp}.$ If $ y \in x_\alpha^{\perp} $ then we write $ x \bot_{\alpha} y.$  The notions of $x^+$, $x^-$ and $x^{\perp}$ \cite{PSMM} are also defined   in a complex Banach space  in the following way:
 \[ x^{+}   =  \bigcap \{ x_\alpha^{+} : \alpha \in U \},  x^{-}  =  \bigcap \{ x_\alpha^{-} : \alpha\in U \} ,
 x^{\perp}  =  \bigcap \{ x_\alpha^{\bot} : \alpha\in U \} .\]                                                                                                         

\smallskip 

The norm attainment set plays a very crucial role in determining the geometry of the space of bounded linear operators \cite{S,Sa,SP}. Recently, Sain \cite{Sb} obtained a complete characterization of the norm attainment set of a bounded linear operator between real normed spaces, by applying the concept of semi-inner-products in normed spaces. In this paper, we extend the result for bounded linear operators on real or complex normed spaces. We also explore the minimum norm attainment set of a  bounded linear operator $ T $ between Hilbert spaces and Banach spaces. First, we obtain a complete characterization of $ m_T, $   for $ T \in \mathbb{L}(\mathbb{H}_1,\mathbb{H}_2), $ where $ \mathbb{H}_1,\mathbb{H}_2 $ are Hilbert spaces. We further explore the geometric structure of $ m_T $ for $ T \in \mathbb{L}(\mathbb{H}_1,\mathbb{H}_2), $ and obtain some interesting properties of $ m_T $ which are analogous to the properties of $ M_T. $ We observe that $ m_T $ must be the unit sphere of some subspace of $ \mathbb{H}_1, $ provided $ m_T $ is non-empty. We next obtain a complete characterization of the minimum norm attainment set of a bounded linear operator between real or complex normed spaces, analogous to the corresponding characterization of the operator norm attainment set. For $ T \in \mathbb{L}(\mathbb{H}_1,\mathbb{H}_2), $ we further study the relative position of $ M_T $ and $ m_T. $ In particular, we prove that if both $ M_T $ and $ m_T $ are non-empty, then either $ M_T = m_T = S_{\mathbb{H}_1} $ or $ M_T $ and $ m_T $ are the unit spheres of two subspaces of $ \mathbb{H}_1, $ which are mutually orthogonal. We would like to remark that in the first case, $ T $ is a scalar multiple of an isometry. On the other hand, as we will see later, the second condition is typical of bounded linear operators, which are not scalar multiples of some isometry, between Hilbert spaces.  We prove that for a rank one bounded linear operator $ T $ on a strictly convex reflexive Banach space $ \mathbb{X}, $ it is possible to describe $ M_T $ and $ m_T $ in a particularly convenient way. As an application of this observation, we obtain a complete characterization of reflexive Banach spaces in terms of the norm attainment sets and the minimum norm attainment sets of rank one bounded linear operators on the space. We end this paper with a characterization of Euclidean spaces among all finite-dimensional real Banach spaces, that further illustrates the importance of the study of the operator norm (minimum norm) attainment set. Let us further remark that for the two-dimensional case, we require the additional condition of strict convexity. \\

We would like to remark  that unless otherwise stated explicitly, we consider the Banach spaces and the Hilbert spaces to be  either real or complex.

\section{ Norm attainment set and Minimal norm attainment set}   
In this section, we first obtain a complete characterization of the norm attainment set for a bounded linear operator $T$ between normed linear spaces $ \mathbb{X}$ and $ \mathbb{Y}.$ We would like to remark that our result holds for both real and complex normed spaces and improves on \cite[Th. 2.3]{Sb}. In order to obtain the desired characterization, we need the following lemma, which again improves on \cite[Lemma 2.2]{Sb} in an elegant way.
\begin{lemma}\label{lemma:hyperplane1}
Let $\mathbb{X}, \mathbb{Y}$ be normed linear spaces and $T\in \mathbb{L}(\mathbb{X}, \mathbb{Y})$. Let $x\in M_T$  and $ y=Tx.$ Then there exist hyperspaces $H_x, H_y$ in $ \mathbb{X} $ and $ \mathbb{Y} $ respectively such that $x\perp_B H_x$ and $y\perp_B H_y$ with $T(H_x)\subseteq H_y$. 
\end{lemma}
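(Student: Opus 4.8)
The plan is to manufacture both hyperspaces as kernels of suitable norm-one functionals, the one on $\mathbb{X}$ being the pull-back of the one on $\mathbb{Y}$ under $T$. We may assume $T \neq 0$, the case $T = 0$ being trivial (take $H_x = \ker \phi$ for any support functional $\phi$ at $x$, and $H_y$ any hyperspace; both orthogonality relations and $T(H_x) = \{0\} \subseteq H_y$ hold automatically). Since $x \in M_T$ and $\|x\| = 1$, we have $\|y\| = \|Tx\| = \|T\| \neq 0$, so $y \neq 0$.

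First I would apply the Hahn--Banach theorem to obtain $\psi_y \in S_{\mathbb{Y}^*}$ with $\psi_y(y) = \|y\|$ (a support functional at $y/\|y\|$, scaled). Put $H_y = \ker \psi_y$, a closed hyperspace of $\mathbb{Y}$ since $\psi_y$ is bounded and nonzero. For any $h \in H_y$ and any scalar $\lambda$,
\[
\|y + \lambda h\| \geq |\psi_y(y + \lambda h)| = |\psi_y(y)| = \|y\|,
\]
so $y \perp_B H_y$.

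Next, define $f = \psi_y \circ T \in \mathbb{X}^*$. Then $f(x) = \psi_y(Tx) = \psi_y(y) = \|y\| = \|T\|$, while $\|f\| \leq \|\psi_y\|\,\|T\| = \|T\|$; combining these (and using $\|x\| = 1$) gives $\|f\| = \|T\|$ and $f \neq 0$. Set $H_x = \ker f$, again a closed hyperspace of $\mathbb{X}$. For $h \in H_x$ and any scalar $\lambda$,
\[
\|x + \lambda h\| \geq \frac{|f(x + \lambda h)|}{\|f\|} = \frac{|f(x)|}{\|f\|} = \frac{\|T\|}{\|T\|} = 1 = \|x\|,
\]
so $x \perp_B H_x$. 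Finally, for $h \in H_x$ we have $\psi_y(Th) = f(h) = 0$, i.e. $Th \in \ker \psi_y = H_y$; hence $T(H_x) \subseteq H_y$, completing the proof.

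There is no serious analytic obstacle here; the only substantive point — and the place where the hypothesis $x \in M_T$ is actually used — is guaranteeing that the pulled-back functional $f = \psi_y \circ T$ is nonzero (equivalently, that $\|f\| = \|T\|$), so that $H_x = \ker f$ is genuinely a hyperspace and the Birkhoff--James estimate for $x$ goes through. Recognizing that the correct $H_x$ is the kernel of $\psi_y \circ T$, rather than the kernel of an arbitrary support functional at $x$, is the one idea the argument rests on, and it is precisely what forces the compatibility $T(H_x)\subseteq H_y$.
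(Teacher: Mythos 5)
Your proposal is correct and follows essentially the same route as the paper: take a support functional $\psi_y$ at $y=Tx$, set $H_y=\ker\psi_y$ and $H_x=\ker(\psi_y\circ T)$, and note that the inclusion $T(H_x)\subseteq H_y$ is automatic. The only difference is cosmetic — the paper invokes James' characterization of Birkhoff--James orthogonality via kernels of support functionals, while you verify the two orthogonality relations directly from the functional estimates.
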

\begin{proof}
	If $T$ is the zero operator then we have nothing to prove. Suppose $T$ is nonzero. Since Birkhoff-James orthogonality is homogeneous, and $T$ is nonzero, without any loss of generality we may assume that $\|T\|=1$.  	Let $x\in M_T .$  For $ y (= Tx) $, there exists a linear functional $g\in S_{\mathbb{Y}^*}$ such that $g(Tx)= \|Tx\|=1$. Let $ker~ g= H_y$. Then by Theorem 2.1 of \cite{Ja}, we have, $Tx\perp_B H_y$. 	Now, $g\circ T\colon \mathbb{X} \rightarrow \mathbb{K}$ is a linear functional with $g\circ T(x)=\|Tx\|=1=\|x\|$ and $\|g\circ T\|\leq \|g\|\|T\|=\|T\|=1$. Let $H_x= ker~(g\circ T)$. Again, by Theorem 2.1 of \cite{Ja}, we have, $x\perp_B H_x$.
	Let $h\in H_x$. Then $g\circ T(h)=0\Rightarrow g(Th)=0\Rightarrow Th\in H_y$. Since this is true for all $h\in H_x,$ we have, $T(H_x)\subseteq H_y$. This completes the proof of the lemma. 
\end{proof}

\begin{remark}\label{remark:nonsmooth}
	 The above result may not hold for all hyperspaces, i.e., given $ T\in \mathbb{L}(\mathbb{X}, \mathbb{Y}) $ and $ x \in M_T, $ there may exist a hyperspace $H$ in $ \mathbb{X} $ such that $x \perp_B H$ but $Tx\not\perp_B T(H)$. Let us illustrate the scenario by furnishing the following example.\\
 Consider $T\colon l_\infty(\mathbb{R}^2)\rightarrow l_\infty(\mathbb{R}^2)$ defined by
	\[ T(1,1)=(0,1),~~T(-1,1)=(-1,0).\]
Then $ M_T=\{\pm(1,1),\pm(-1,1)\}$.  Here, we have, $(1,1)\in M_T$ and $(1,1)\perp_B(0,1)$ but $T(1,1)=(0,1)\not\perp_B T(0,1)=(-\frac{1}{2},\frac{1}{2})$. Therefore, taking $ x = (1,1) $ and $ H $ to be the one-dimensional subspace spanned by $ (0,1) $, we see that $x\perp_B H$ but  $Tx\not\perp_B T(H)$.\\
Now, if we replace $\mathbb{R}^2$ by $\mathbb{C}^2$ in this example, then $ M_T=\bigcup_{\theta \in [0, 2\pi)} e^{i\theta}\{\pm(1,1),\pm(-1,1)\}$. Once again, we have,  $(1,1)\in M_T$ and $(1,1)\perp_B(0,1)$ but $T(1,1)=(0,1)\not\perp_B T(0,1)=(-\frac{1}{2},\frac{1}{2})$. Therefore, with the same choice of $ x $ and $ H, $ it follows that $x\perp_B H$ but  $Tx\not\perp_B T(H)$
\end{remark}

Let us now apply Lemma \ref{lemma:hyperplane1} towards obtaining a complete characterization of the norm attainment set of a bounded linear operator between normed spaces.

\begin{theorem}\label{theorem:characterization M_T}
Let $\mathbb{X}, \mathbb{Y}$ be normed linear spaces and $T\in \mathbb{L}(\mathbb{X}, \mathbb{Y})$. Let $x\in S_{\mathbb{X}}$. Then $x\in M_T$ if and only if there exist two s.i.p. $[~,~]_\mathbb{X}$ and $[~,~]_\mathbb{Y}$ on $\mathbb{X}$ and $\mathbb{Y}$ respectively such that for any $z\in \mathbb{X}$,\\ 
\[[Tz, Tx]_\mathbb{Y}= \|T\|^2[ z, x]_\mathbb{X}.\]
\end{theorem}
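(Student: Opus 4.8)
The plan is to prove both directions by constructing (or exploiting) semi-inner-products that are compatible with the Birkhoff-James orthogonality structure exhibited in Lemma~\ref{lemma:hyperplane1}. The key background fact I would invoke is the Giles--Lumer correspondence: on any normed space $\mathbb{X}$ there is a semi-inner-product $[~,~]$ with $[x,x]=\|x\|^2$, and such s.i.p.'s are in bijective correspondence with choices of support functional at each point, via $[z,x] = \|x\|\,\psi_x(z)$ where $\psi_x$ is a support functional at $x/\|x\|$. In particular, $[z,x]=0$ for all $z$ in a given hyperspace $H$ with $x\perp_B H$ precisely when we pick $\psi_x$ to be the norming functional whose kernel is $H$; and one always has $\|x\|^2\,\mathrm{Re}\,[y,x]\le$ (appropriate growth), with $x\perp_B y$ iff there is \emph{some} s.i.p. (equivalently some support functional at $x$) with $[y,x]=0$.

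For the forward direction, assume $x\in M_T$, $\|T\|=1$ without loss of generality (homogeneity), and $y=Tx$. Apply Lemma~\ref{lemma:hyperplane1} to obtain the functional $g\in S_{\mathbb{Y}^*}$ with $g(Tx)=1$, hyperspaces $H_y=\ker g$, $H_x=\ker(g\circ T)$, so that $x\perp_B H_x$, $y\perp_B H_y$, $T(H_x)\subseteq H_y$. Now I would define $[~,~]_{\mathbb{Y}}$ to be any s.i.p. on $\mathbb{Y}$ whose associated support functional at $y/\|y\|$ is $g$ (so $[\,\cdot\,,y]_{\mathbb{Y}} = \|y\|\,g(\cdot) = g(\cdot)$ since $\|y\|=\|Tx\|=1$); and define $[~,~]_{\mathbb{X}}$ to be any s.i.p. on $\mathbb{X}$ whose support functional at $x$ is $g\circ T$ (so $[\,\cdot\,,x]_{\mathbb{X}} = (g\circ T)(\cdot)$, legitimate since $\|g\circ T\|=1=(g\circ T)(x)$). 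Then for every $z\in\mathbb{X}$, $[Tz,Tx]_{\mathbb{Y}} = [Tz,y]_{\mathbb{Y}} = g(Tz) = (g\circ T)(z) = [z,x]_{\mathbb{X}} = \|T\|^2[z,x]_{\mathbb{X}}$, which is exactly the claimed identity. The only care needed is the standard fact that a single support functional at one point can always be extended to a full s.i.p. on the whole space (pick arbitrary support functionals at all other points), which is the Giles--Lumer construction.

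For the converse, suppose such s.i.p.'s $[~,~]_{\mathbb{X}}$, $[~,~]_{\mathbb{Y}}$ exist with $[Tz,Tx]_{\mathbb{Y}}=\|T\|^2[z,x]_{\mathbb{X}}$ for all $z$. Put $z=x$: then $[Tx,Tx]_{\mathbb{Y}} = \|T\|^2[x,x]_{\mathbb{X}} = \|T\|^2\|x\|^2 = \|T\|^2$, so $\|Tx\|^2 = \|T\|^2$, i.e. $\|Tx\|=\|T\|$ and $x\in M_T$. This direction is essentially immediate from property (iii)/the normalization $[x,x]=\|x\|^2$ built into the s.i.p., together with $\|x\|=1$.

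I expect the main (though modest) obstacle to be purely bookkeeping about semi-inner-products: one must be careful that the definition of s.i.p. used here forces $[x,x]=\|x\|^2$ (this follows from (iv) applied symmetrically plus a normalization, or is part of the intended definition), and that the map $z\mapsto [z,x]$ is a bounded linear functional of norm $\|x\|$ that norms $x$ — these are the Lumer--Giles structural facts and should be cited rather than reproved. The genuinely new content over the real case treated in \cite{Sb} is simply that the Hahn--Banach/James machinery and the Lumer--Giles construction work verbatim over $\mathbb{C}$, so no separate argument for the complex case is needed; the homogeneity reduction to $\|T\|=1$ and Lemma~\ref{lemma:hyperplane1} carry the whole forward direction.
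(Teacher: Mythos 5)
Your proposal is correct and follows essentially the same route as the paper: reduce to $\|T\|=1$, invoke Lemma~\ref{lemma:hyperplane1}, build the two s.i.p.'s via the Giles construction with the distinguished support functionals $g\circ T$ at $x$ and $g$ at $Tx$, and read off the identity (the paper verifies it through the decomposition $z=\alpha x+h$, you note directly that $[\cdot,x]_{\mathbb{X}}=(g\circ T)(\cdot)$ and $[\cdot,Tx]_{\mathbb{Y}}=g(\cdot)$, which is the same computation), while the converse is the same one-line evaluation at $z=x$.
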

\begin{proof} If $T$ is the zero operator, then the theorem holds trivially. Without loss of generality we may assume that $\|T\| = 1.$ Let us first prove the sufficient part of the theorem. Let $x\in S_{\mathbb{X}}$ be such that for any $z\in \mathbb{X}$, 
	$[Tz, Tx]_\mathbb{Y}= \|T\|^2[ z, x]_\mathbb{X}.$ Taking $z=x$, we obtain, $[Tx, Tx]_\mathbb{Y}= \|T\|^2$ , i.e., $\|Tx\|=\|T\|$. This proves that $x\in M_T.$ \\
	Next we prove the necessary part.  Let $x\in M_T$ and $ y=Tx.$   Then from Lemma \ref{lemma:hyperplane1}, it follows that there exist hyperspaces $H_x, H_y$ in $ \mathbb{X} $ and $ \mathbb{Y} $ respectively such that $x\perp_B H_x$ and $Tx\perp_B H_y$ with $T(H_x)\subseteq H_y$. Since $x\perp_B H_x$, there exists a linear functional 
    $\psi_x\colon \mathbb{X}\rightarrow \mathbb{K}$ such that $\psi_x(x)=\|x\|$ and $ker ~\psi_x=H_x$. Since $T$ is nonzero and $x\in M_T$, we must have $Tx$ is nonzero in $\mathbb{Y}$. Therefore, there exists a linear functional 
    $\psi_{y}$ on $ \mathbb{Y} $ such that $\psi_{y}(y)=\|y\| = \|Tx\| = 1 $ and $ker~ \psi_{y}=H_y$. 
	 It follows from the Hahn-Banach theorem that for each $u \in S_{\mathbb{X}}$ and $ v \in S_{\mathbb{Y}}$ there exist at least one $f_u \in S_{\mathbb{X}^*}$ such that $ f_u(u) = 1$  and at least one $ g_v \in S_{\mathbb{Y}^*} $ such that $g_v(v)=1.$ 
Let us now define two s.i.p. on $\mathbb{X}$ and $\mathbb{Y}$ in the following way: 

\smallskip
For each $ z, u \in \mathbb{X},$ we define $ [z,u]_{\mathbb{X}} = f_u(z)$, with the additional restriction that if $ u = x$ then we take $ f_u= \psi_x.$ Moreover, for any $ \lambda \in \mathbb{K}, $ we choose $f_{\lambda u} 	= \bar{\lambda} f_u.$
 \smallskip
 		
For each $ w, v \in \mathbb{Y},$ we define $ [w,v]_{\mathbb{Y}} = g_v(w),$ with the additional restriction that if $ v = y$ then we take $ g_v= \psi_y.$ Moreover, for any $ \lambda \in \mathbb{K}, $ we choose $g_{\lambda v} 	= \bar{\lambda} g_v.$ 
\smallskip
					
Then following \cite{G}, it is easy to check that $[~,~]_{\mathbb{X}}$ and $ [~,~]_{\mathbb{Y}}$  are indeed  s.i.p. on $\mathbb{X}$ and $\mathbb{Y}$ respectively. 
Let $z\in \mathbb{X}$ be  arbitrary. Clearly $z$ can be written as $z=\alpha x + h$, for some $\alpha\in \mathbb{K}$ and $h\in H_x$. Moreover, we have, 
     \[[z,x]_{\mathbb{X}}=[\alpha x + h, x]_{\mathbb{X}}=\alpha[x,x]_{\mathbb{X}}+[h,x]_{\mathbb{X}}= \alpha \|x\|^2=\alpha.\]
Also,  
	\[[Tz,Tx]_{\mathbb{Y}}=[\alpha Tx + Th, x]_{\mathbb{Y}}=\alpha[Tx,Tx]_{\mathbb{Y}}+[Th,Tx]_{\mathbb{Y}}= \alpha \|Tx\|^2=\|T\|^2[z,x]_{\mathbb{X}}.\]
	Since the above relation holds for all $ z \in \mathbb{X},$ this completes the proof of the theorem.
\end{proof}
 
Let us now prove an easy but useful necessary condition for the minimum norm attainment of a nonzero bounded operator on a Banach space, at a particular point of the unit sphere. 

\begin{theorem}\label{theorem:preserve}
	Let $\mathbb{X}$ and $\mathbb{Y}$ be Banach spaces. Let $T \in \mathbb{L} (\mathbb{X}, \mathbb{Y})$ be non-zero and $x \in m_T$. Then
	
	(i) $T(x^{+})\subseteq (Tx)^{+}.$
	
	(ii) $T(x^{-})\subseteq (Tx)^{-}.$
	
	(iii) $T(x^{\perp})\subseteq (Tx)^{\perp}.$
	
\end{theorem}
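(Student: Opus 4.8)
The plan is to first isolate a single inequality encoding the minimality of $x$, and then read off all three inclusions from it almost immediately. Since $x \in m_T \subseteq S_{\mathbb{X}}$, we have $\|x\| = 1$ and $\|Tx\| = m(T) \leq \|Tw\|$ for every $w \in S_{\mathbb{X}}$. Given any nonzero $z \in \mathbb{X}$, applying this to $w = z/\|z\|$ and multiplying through by $\|z\|$ yields $\|Tz\| \geq \|Tx\|\,\|z\|$, and this trivially persists for $z = 0$. Thus
\[
\|Tz\| \geq \|Tx\|\,\|z\| \qquad \text{for all } z \in \mathbb{X}.
\]
This is the only structural input the proof needs.

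For the real case, to prove (i) I would let $y \in x^{+}$, so that $\|x + \lambda y\| \geq \|x\| = 1$ for all $\lambda \geq 0$. Then for every $\lambda \geq 0$,
\[
\|Tx + \lambda Ty\| = \|T(x + \lambda y)\| \geq \|Tx\|\,\|x + \lambda y\| \geq \|Tx\|,
\]
which is precisely the statement that $Ty \in (Tx)^{+}$. Parts (ii) and (iii) then follow verbatim, replacing ``$\lambda \geq 0$'' by ``$\lambda \leq 0$'' and by ``$\lambda \in \mathbb{R}$'' respectively, and using the defining inequalities of $x^{-}$ and $x^{\perp}$.

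For the complex case I would run the same computation inside each set occurring in the intersection definitions $x^{+} = \bigcap_{\alpha \in U} x_\alpha^{+}$, $x^{-} = \bigcap_{\alpha \in U} x_\alpha^{-}$, $x^{\perp} = \bigcap_{\alpha \in U} x_\alpha^{\perp}$. Fixing $\alpha \in U$ and taking $y \in x_\alpha^{+}$, we get $\|x + t\alpha y\| \geq 1$ for all $t \geq 0$, whence $\|Tx + t\alpha Ty\| = \|T(x + t\alpha y)\| \geq \|Tx\|\,\|x + t\alpha y\| \geq \|Tx\|$ for all $t \geq 0$, i.e. $Ty \in (Tx)_\alpha^{+}$. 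Hence $T(x_\alpha^{+}) \subseteq (Tx)_\alpha^{+}$ for every $\alpha \in U$, and intersecting over $\alpha \in U$ gives $T(x^{+}) \subseteq (Tx)^{+}$; the inclusions for $x_\alpha^{-}$, $x_\alpha^{\perp}$, and therefore for $x^{-}$, $x^{\perp}$, are obtained identically.

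I do not anticipate a genuine obstacle here; the one point deserving a moment's care is the degenerate case $Tx = 0$, which can occur when $m(T) = 0$ — but then $(Tx)^{+} = (Tx)^{-} = (Tx)^{\perp} = \mathbb{Y}$ and all three inclusions are automatic, and in any case the displayed inequality $\|Tz\| \geq \|Tx\|\,\|z\|$ and the subsequent estimates go through unchanged whether or not $Tx$ vanishes. It is perhaps worth noting that the corresponding inclusions fail in general for $M_T$ in place of $m_T$ (cf. Remark~\ref{remark:nonsmooth}), so it is exactly the minimality inequality above that makes the statement work.
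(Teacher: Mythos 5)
Your proof is correct and follows essentially the same route as the paper: the minimality of $x$ gives $\|Tz\|\geq \|Tx\|\,\|z\|$ (the paper phrases this as $\|Tx\|\leq \|T((x+t\alpha y)/\|x+t\alpha y\|)\|$), and combining it with $\|x+\lambda y\|\geq 1$ yields $\|Tx+\lambda Ty\|\geq\|Tx\|$, done for each $\alpha\in U$ in the complex case and then intersected. The only cosmetic difference is that you extract the inequality up front and note the harmless degenerate cases, which the paper leaves implicit.
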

\begin{proof}
	$(i)$ Let us assume that both $\mathbb{X}$ and $ \mathbb{Y}$ are complex Banach spaces. Let $y\in x^{+}.$ Then $ y \in x_{\alpha}^+ $ for each $ \alpha \in U. $ Then $ \| x + \lambda y \| \geq \|x \| = 1 $ for all $ \lambda = t \alpha, t \geq 0.$ Since $ x \in m_T, $ it follows that for any $ t \geq 0, $ we must have,
	$ \| Tx \| \leq \| T(\frac{x + t \alpha y}{ \| x + t \alpha y \| }) \|  = \frac{ \|Tx + t \alpha Ty \|} { \| x + t \alpha y \|} \leq \| Tx + t \alpha Ty \|.$ This implies that  $ Ty \in (Tx)_{\alpha}^+$ and so
	$T(x_{\alpha}^{+})\subseteq (Tx)_{\alpha}^{+}.$ This holds for each $ \alpha \in U$ and so  $T(x^{+})\subseteq (Tx)^{+}.$ \\
		For real Banach spaces the result follows by noting that $ x^+ = x_{\alpha}^+ $ with $\alpha=1.$ \\ 
(ii) and (iii) can be proved similarly.
		
\end{proof}

\begin{remark}
	It is interesting to observe that in Theorem \ref{theorem:preserve} $(iii), $ if we assume that $x\in M_T$ instead of assuming $ x \in m_T, $ then to prove the same result, we additionaly require smoothness of $x$ and $Tx$, which  follows from Theorem 2.3 of \cite{S}. Moreover, the example given in Remark \ref{remark:nonsmooth} shows that smoothness is necessary in case $ x \in M_T. $
\end{remark}

\begin{cor}
	Let $\mathbb{X}$ be a finite-dimensional Banach space and $T \in \mathbb{L} (\mathbb{X}).$ Then there exists $x \in S_{\mathbb{X}}$ such that $T$ preserves Birkhoff-James orthogonality at $x$, i.e., $x\perp_B y \Rightarrow Tx\perp_B Ty.$	
\end{cor}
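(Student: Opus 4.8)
The plan is to reduce the statement to Theorem~\ref{theorem:preserve}(iii). Recall that $x^{\perp}=\{y\in\mathbb{X}\colon x\perp_B y\}$ (in the complex case this is the common value $\bigcap_{\alpha\in U}x_\alpha^{\perp}$, which by the definitions given above is precisely the set of $y$ with $\|x+\lambda y\|\geq\|x\|$ for every $\lambda\in\mathbb{C}$), and that $Ty\in(Tx)^{\perp}$ is literally the assertion $Tx\perp_B Ty$. Hence the inclusion $T(x^{\perp})\subseteq(Tx)^{\perp}$ says exactly that $T$ preserves Birkhoff-James orthogonality at $x$. So it suffices to exhibit one point $x\in S_{\mathbb{X}}$ lying in $m_T$ and then invoke Theorem~\ref{theorem:preserve}(iii), which applies since a finite-dimensional normed space is automatically a Banach space.

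Producing such a point is where finite-dimensionality enters. If $T$ is the zero operator, any $x\in S_{\mathbb{X}}$ works, since $Tx=0$ and the zero vector is Birkhoff-James orthogonal to every vector. If $T$ is non-zero, then $S_{\mathbb{X}}$ is compact and the map $x\mapsto\|Tx\|$ is continuous on it, so it attains its infimum $m(T)$ at some $x_0\in S_{\mathbb{X}}$; by definition $x_0\in m_T$, and Theorem~\ref{theorem:preserve}(iii) applied at $x_0$ completes the proof.

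There is essentially no obstacle here; the whole weight of the corollary rests on Theorem~\ref{theorem:preserve}(iii), which is already available. The only point worth stressing --- and presumably the reason one routes the argument through $m_T$ rather than $M_T$ --- is that the norm attainment set $M_T$ is also non-empty in finite dimensions by the same compactness argument, but the analogue of Theorem~\ref{theorem:preserve}(iii) for a point of $M_T$ additionally demands smoothness of $x$ and of $Tx$ (see the Remark following Theorem~\ref{theorem:preserve}), and the example in Remark~\ref{remark:nonsmooth} shows this smoothness cannot be dropped. Working with the minimum norm attainment set sidesteps the smoothness hypothesis altogether.
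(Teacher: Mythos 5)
Your proposal is correct and follows essentially the same route as the paper: take a minimum norm attaining point $x_0\in m_T$ (guaranteed by compactness of $S_{\mathbb{X}}$ in finite dimensions) and apply Theorem \ref{theorem:preserve}(iii) at $x_0$. Your separate treatment of the zero operator and the remark on why $m_T$ is preferable to $M_T$ are fine additions but do not change the argument.
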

\begin{proof}
	Since $\mathbb{X}$ is a finite-dimensional Banach space, there exists a unit vector $x_0$ such that $\|Tx_0\|=m(T)$, where $ m(T)= inf \{ \|Tx\| \colon \|x\|=1 \}.$ Therefore,  $x_0 \in m_T.$ Now, by Theorem \ref{theorem:preserve}, for any $y\in \mathbb{X}$, $x_0\perp_B y\Rightarrow Tx\perp_B Ty$, i.e., $T$ preserves Birkhoff-James orthogonality at $x_0.$
	
\end{proof}

An easy application of Theorem \ref{theorem:preserve} yields the following result.

\begin{cor}\label{cor:hyperplane2}
Let $\mathbb{X}, \mathbb{Y}$ be normed linear spaces and $T\in \mathbb{L}(\mathbb{X}, \mathbb{Y})$. If $x\in m_T$  then for any hyperspace $ H_x $ in $ \mathbb{X}, $ with $x\perp_B H_x, $ there exists a hyperspace $ H_y $ in $ \mathbb{Y}  $ such that $y(=Tx)\perp_B H_y$ with $T(H_x)\subseteq H_y$.
\end{cor}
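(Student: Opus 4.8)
The plan is to realise $H_y$ as the kernel of a norming functional at $Tx$ that has been arranged so as to annihilate $T(H_x)$ --- in effect, running the construction of Lemma~\ref{lemma:hyperplane1} in reverse. The ``easy application'' is the opening move: $x \perp_B H_x$ says precisely that $x \perp_B h$ for every $h \in H_x$, i.e.\ $H_x \subseteq x^{\perp}$, so Theorem~\ref{theorem:preserve}$(iii)$ gives at once $T(H_x) \subseteq T(x^{\perp}) \subseteq (Tx)^{\perp}$. (In contrast to the situation for $M_T$ in Remark~\ref{remark:nonsmooth}, no smoothness of $x$ or $Tx$ is required here, which is why the conclusion may be demanded for \emph{every} such $H_x$.) What remains is to upgrade the pointwise relation ``$Tx \perp_B w$ for all $w \in T(H_x)$'' to orthogonality to a single hyperspace that contains $T(H_x)$.

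Suppose first that $Tx \neq 0$, equivalently $m(T) > 0$. Since $H_x$ is a hyperspace with $x \perp_B H_x$, James's theorem (Theorem~2.1 of \cite{Ja}) furnishes $f \in S_{\mathbb{X}^{*}}$ with $f(x) = \|x\| = 1$ and $\ker f = H_x$. Because $x \in m_T$ we have $\|Tz\| \geq m(T)\|z\| = \|Tx\|\,\|z\|$ for every $z \in \mathbb{X}$; in particular $T$ is injective, so $g_0(Tz) := \|Tx\|\,f(z)$ is a well-defined linear functional on $T(\mathbb{X})$, and $|g_0(Tz)| = \|Tx\|\,|f(z)| \leq \|Tx\|\,\|z\| \leq \|Tz\|$ shows $\|g_0\| \leq 1$. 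Extend $g_0$ by the Hahn--Banach theorem to $g \in \mathbb{Y}^{*}$ with $\|g\| \leq 1$; since $g(Tx) = \|Tx\|\,f(x) = \|Tx\| \neq 0$, necessarily $\|g\| = 1$, so James's theorem gives $Tx \perp_B \ker g$. Set $H_y := \ker g$, which is a hyperspace because $g \neq 0$. Finally, $g(Th) = \|Tx\|\,f(h) = 0$ for every $h \in H_x = \ker f$, whence $T(H_x) \subseteq H_y$, as desired.

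The step I expect to be the real obstacle is precisely the hypothesis $Tx \neq 0$: it is what makes $T$ injective and hence makes $g_0$ well defined on $T(\mathbb{X})$, the formula $g_0(Tz) = \|Tx\|\,f(z)$ otherwise depending on the choice of representative $z$ unless $\ker T \subseteq \ker f = H_x$. In the remaining degenerate case $Tx = 0$ --- that is, $m(T) = 0$ with $x \in \ker T$ --- one has $(Tx)^{\perp} = \mathbb{Y}$, so $Tx \perp_B H_y$ holds for \emph{every} hyperspace $H_y$, and, since here $T(H_x) = T(\mathbb{X})$ (because $\mathbb{X} = \mathbb{K}x \oplus H_x$ and $Tx = 0$), the assertion reduces to $T(\mathbb{X})$ being contained in some hyperspace of $\mathbb{Y}$, i.e.\ to $T(\mathbb{X})$ not being dense. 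I would dispatch this by taking a nonzero functional vanishing on $\overline{T(\mathbb{X})}$ whenever $T(\mathbb{X})$ is not dense, and otherwise flag that the statement should carry the extra hypothesis (harmless in this regime) that $T$ be bounded below, i.e.\ $m(T) > 0$, under which the degenerate case cannot occur. Modulo this caveat, the corollary follows from the construction above.
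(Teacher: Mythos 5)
Your argument is correct, but it is organised quite differently from the paper's. The paper disposes of this corollary in one line: by Theorem \ref{theorem:preserve}$(iii)$, $T(H_x)\subseteq T(x^{\perp})\subseteq (Tx)^{\perp}$, leaving implicit the remaining step of packaging the subspace $T(H_x)$ inside a single hyperspace to which $Tx$ is Birkhoff--James orthogonal (which, when $Tx\neq 0$, is the standard Hahn--Banach argument: define $\alpha Tx+w\mapsto \alpha\|Tx\|$ on $\mathbb{K}Tx\oplus T(H_x)$, note it has norm one precisely because of the pointwise orthogonality, and extend). You instead bypass Theorem \ref{theorem:preserve} in the main case altogether: from $x\in m_T$ with $m(T)=\|Tx\|>0$ you get $\|Tz\|\geq\|Tx\|\,\|z\|$, hence injectivity, and you extend the functional $Tz\mapsto\|Tx\|f(z)$ from the whole range $T(\mathbb{X})$; this is a legitimate and slightly more self-contained construction (it reverses Lemma \ref{lemma:hyperplane1}, as you say), at the cost of using the bounded-below inequality rather than the orthogonality-preservation theorem the corollary is meant to showcase. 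Your closing caveat is also well taken and goes beyond the paper: when $m(T)=0$ and $x\in\ker T$ one has $T(H_x)=T(\mathbb{X})$, and if $T$ maps onto $\mathbb{Y}$ (e.g.\ a surjection onto a one-dimensional space) no hyperspace of $\mathbb{Y}$ can contain $T(H_x)$, so the statement needs either $m(T)>0$ or a properness/non-density assumption on the range --- a degenerate case the paper's one-line proof does not address.
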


\begin{proof}
Follows from Theorem \ref{theorem:preserve}$(iii).$
\end{proof}

As another application of Theorem \ref{theorem:preserve}, it is possible to slightly improve Theorem 2.8 of \cite{S}. Let $|M_T|$ denote the cardinality of $M_T$. It was proved in  \cite[Th. 2.8]{S} that if $T\in \mathbb {L}(l_{p}^2)$, where $p\in \mathbb{N}\setminus \{1\}$, is not a scalar multiple of an isometry then $|M_T|\leq 2(8p-5)$. Combining Theorem \ref{theorem:preserve} of the present paper with this result, we obtain the following theorem. 
\begin{theorem}
	Let $\mathbb{X}=l_{p}^2(\mathbb{R}), p\in \mathbb{N}\setminus \{1\}$ and let $T\in \mathbb{L}(\mathbb{X})$ be such that $T$ is not a scalar multiple of an isometry. Then $|M_T|\leq 4(4p-3)$. 
	
\end{theorem}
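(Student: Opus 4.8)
The plan is to combine the old bound $|M_T| \leq 2(8p-5)$ from \cite[Th. 2.8]{S} with the new information extracted from Theorem \ref{theorem:preserve}, exploiting the interplay between $M_T$ and $m_T$ for a non-isometry on $l_p^2(\mathbb{R})$. First I would recall that $\mathbb{X} = l_p^2(\mathbb{R})$ is finite-dimensional, hence reflexive, so $m_T$ is non-empty: there exists $x_0 \in S_{\mathbb{X}}$ with $\|Tx_0\| = m(T)$. Since $T$ is not a scalar multiple of an isometry, $m(T) < \|T\|$, so $m_T \cap M_T = \emptyset$. By Theorem \ref{theorem:preserve}(iii), $T$ preserves Birkhoff--James orthogonality at every point of $m_T$; the idea is that each such point of $m_T$, together with the orthogonality-preservation property, forces a corresponding structural constraint that effectively "uses up" part of the budget of directions allotted by the count $2(8p-5)$.

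The key step is the following dichotomy-type observation. Consider the operator $S = T^{-1}$ when $T$ is invertible (if $T$ is not invertible, then $m(T) = 0$ and $m_T = \ker T \cap S_{\mathbb{X}}$, which one handles separately and more easily). For invertible $T$, one checks that $x \in m_T$ if and only if $Tx/\|Tx\| \in M_{S}$, i.e., $m_T$ is in bijective correspondence (via $x \mapsto Tx/\|Tx\|$) with a subset of $M_{T^{-1}}$, up to normalization. Since $T^{-1}$ is also a bounded linear operator on $l_p^2(\mathbb{R})$ that is not a scalar multiple of an isometry, we again get $|M_{T^{-1}}| \leq 2(8p-5)$, hence $|m_T| \leq 2(8p-5)$ as well. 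Now $M_T$ and $m_T$ are disjoint subsets of $S_{\mathbb{X}}$, and for a non-isometry the norm-attaining directions and the minimum-norm directions interlace in the two-dimensional sphere; the orthogonality-preservation at each point of $m_T$ (Theorem \ref{theorem:preserve}(iii)) pins down, for each $x \in m_T$, a hyperplane (here a line) direction that must be sent to a line, which in $l_p^2$ with $p \neq 2$ again restricts to finitely many possibilities already accounted for in the count.

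More concretely, I would argue that the bound $|M_T| \leq 2(8p-5)$ in \cite[Th. 2.8]{S} is obtained by bounding the number of solutions of a certain polynomial/analytic equation on $S_{\mathbb{X}}$; the content of the improvement is that among the $2(8p-5)$ candidate directions produced by that argument, at least $2$ must in fact lie in $m_T$ rather than $M_T$ (by compactness, the minimum of $\|T\cdot\|$ on $S_{\mathbb{X}}$ is attained, and the minimizing direction and its antipode satisfy the same stationarity equation that the argument in \cite{S} analyzes, hence were counted). Removing these $\pm x_0$ gives $|M_T| \leq 2(8p-5) - 2 = 16p - 12 = 4(4p-3)$, as claimed. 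The main obstacle is the last step: making precise that the stationarity/extremality equation whose solution set is bounded by $2(8p-5)$ genuinely includes the minimum-norm directions, so that subtracting them is legitimate; this requires unpacking the proof of \cite[Th. 2.8]{S} far enough to see that both the norm-attaining and the minimum-norm critical points of $x \mapsto \|Tx\|$ on $S_{\mathbb{X}}$ satisfy the same governing equation, and that $\pm x_0 \notin M_T$ precisely because $T$ is not a scalar multiple of an isometry.
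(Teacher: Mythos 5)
Your overall strategy---observe $m_T\neq\emptyset$ by finite-dimensionality, $M_T\cap m_T=\emptyset$ because $T$ is not a scalar multiple of an isometry, and then subtract the two antipodal minimum-norm points from the budget $2(8p-5)$ to land on $4(4p-3)$---is exactly the arithmetic of the paper's proof. But the pivotal step is left open, and you say so yourself: you never establish that the points of $m_T$ are actually counted by the quantity that \cite[Th.~2.8]{S} bounds, and without that the subtraction is not legitimate. Your guessed mechanism (a stationarity/polynomial equation on $S_{\mathbb{X}}$ whose solution set both $M_T$ and $m_T$ must satisfy) is not what is needed and would indeed require re-deriving the count in \cite{S}. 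The paper closes this gap differently and much more cheaply: the arguments of \cite[Th.~2.8]{S} in fact bound the number of points of $S_{\mathbb{X}}$ at which $T$ \emph{preserves Birkhoff--James orthogonality} by $2(8p-5)$; points of $M_T$ preserve it (this is how the original bound on $|M_T|$ arises, using smoothness of $l_p^2$), and by Theorem \ref{theorem:preserve}(iii) of the present paper every point of $m_T$ preserves it as well, with no stationarity analysis required. Since $M_T$ and $m_T$ are disjoint and $m_T$ contains at least the two points $\pm x_0$, the common budget of $2(8p-5)$ orthogonality-preserving points forces $|M_T|\leq 2(8p-5)-2$. You quote Theorem \ref{theorem:preserve}(iii) early on, but you never connect it to the counting in \cite{S}; that connection is the whole content of the improvement.

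A secondary remark: the digression through $T^{-1}$ (identifying $m_T$ with a subset of $M_{T^{-1}}$ to bound $|m_T|$ by $2(8p-5)$, and the ``interlacing'' of maximizing and minimizing directions) is not needed for the statement and does no work toward the missing step; all that is required of $m_T$ is that it is nonempty, symmetric (hence has at least two elements), disjoint from $M_T$, and consists of orthogonality-preserving points.
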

\begin{proof}
It follows from the arguments in the proof of \cite[Th. 2.8]{S}  that $T$ can preserve Birkhoff-James orthogonality at not more than $2(8p-5)$ number of points. Since $T$ is not a scalar multiple of an isometry, $M_T\bigcap m_T=\phi$. Furthermore, we have $m_T\neq \phi$, as $\mathbb{X}$ is finite dimensional. Since $m_T$ must contains at least $2$ elements, we must have, $|M_T|\leq 2(8p-5)-2=4(4p-3)$. This completes the proof of the theorem.
	
\end{proof}

Let us now obtain a complete characterization of the minimum norm attainment set of a bounded linear operator between Hilbert spaces. We would like to remark that the analogous characterization of the norm attainment set of a bounded linear operator between Hilbert spaces has been obtained in \cite{Sa}.

\begin{theorem}\label{theorem:characterization}
	Let $\mathbb{H}_1, \mathbb{H}_2$ be Hilbert spaces and $ T \in \mathbb{L}(\mathbb{H}_1, \mathbb{H}_2).$ Given any $x\in S_{\mathbb{H}_1}$, the following are equivalent:\\
	(i) $x\in m_T$.\\
	(ii) (a) Given any $ y \in H_1,~\langle x,y\rangle = 0$ implies that $\langle Tx, Ty\rangle = 0, $\\
	(b) inf $\{\|Ty\|\colon \|y\|=1, \langle x,y\rangle = 0\}\geq\|Tx\|.$\\
	(iii) $\langle Tx,Ty\rangle= m^2(T)\langle x,y\rangle,$ for every $y\in \mathbb{H}_1$.
\end{theorem}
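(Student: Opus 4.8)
The plan is to prove the three implications $(i)\Rightarrow(ii)$, $(ii)\Rightarrow(iii)$, and $(iii)\Rightarrow(i)$, exploiting the Hilbert space structure throughout. We may assume $T\neq 0$, since otherwise everything is trivial, and after rescaling we could even normalize $m(T)$, though it is cleaner here to keep $m(T)$ explicit.

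For $(i)\Rightarrow(ii)$: part (a) is essentially Corollary \ref{cor:hyperplane2} (or directly Theorem \ref{theorem:preserve}$(iii)$) specialized to Hilbert spaces, where Birkhoff-James orthogonality coincides with inner-product orthogonality; if $\langle x,y\rangle=0$ then $x\perp_B y$, hence $Tx\perp_B Ty$, hence $\langle Tx,Ty\rangle=0$. For part (b), observe that $\{y\in S_{\mathbb{H}_1}\colon\langle x,y\rangle=0\}$ is exactly the unit sphere of the hyperspace $x^{\perp}$, and every such $y$ is a unit vector in $\mathbb{H}_1$, so $\|Ty\|\geq m(T)=\|Tx\|$ simply by definition of $m(T)$; taking the infimum gives the claimed inequality. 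So this implication is immediate from the definitions and the already-proved preservation result.

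For $(ii)\Rightarrow(iii)$: given an arbitrary $y\in\mathbb{H}_1$, decompose $y=\alpha x+h$ with $\alpha=\langle y,x\rangle$ and $h\in x^{\perp}$. By (a), $\langle Tx,Th\rangle=0$, so $\langle Tx,Ty\rangle=\overline{\alpha}\|Tx\|^2=\langle x,y\rangle\|Tx\|^2$ (with the conjugation placed to match the paper's conventions; in the real case this is just $\alpha\|Tx\|^2$). It therefore remains to show $\|Tx\|=m(T)$, i.e. $\|Tx\|^2=m^2(T)$. One inequality, $\|Tx\|\geq m(T)$, holds trivially since $x\in S_{\mathbb{H}_1}$. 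For the reverse, take any $z\in S_{\mathbb{H}_1}$ and write $z=\beta x+k$ with $k\in x^{\perp}$, $|\beta|^2+\|k\|^2=1$. Using (a) to kill the cross term, $\|Tz\|^2=|\beta|^2\|Tx\|^2+\|Tk\|^2$. If $k\neq 0$, writing $k=\|k\|k'$ with $k'$ a unit vector in $x^{\perp}$ and invoking (b), $\|Tk\|^2=\|k\|^2\|Tk'\|^2\geq\|k\|^2\|Tx\|^2$, so $\|Tz\|^2\geq(|\beta|^2+\|k\|^2)\|Tx\|^2=\|Tx\|^2$; if $k=0$ then $|\beta|=1$ and $\|Tz\|=\|Tx\|$. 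Hence $\|Tz\|\geq\|Tx\|$ for every unit vector $z$, which forces $m(T)\geq\|Tx\|$, completing the equality.

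For $(iii)\Rightarrow(i)$: setting $y=x$ gives $\|Tx\|^2=m^2(T)\|x\|^2=m^2(T)$, so $\|Tx\|=m(T)$, meaning $x\in m_T$. I expect the main obstacle to be the sandwich argument establishing $\|Tx\|=m(T)$ in $(ii)\Rightarrow(iii)$: it requires the orthogonal decomposition along $x$ together with the Pythagorean identity (genuinely using that we are in a Hilbert space, not merely a normed space) and a careful application of condition (b) to the normalized component in $x^{\perp}$, including the degenerate case where that component vanishes. The remaining steps are routine once the definitions of $m(T)$ and of Birkhoff-James orthogonality in Hilbert spaces are unwound.
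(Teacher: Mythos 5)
Your proof is correct and follows essentially the same route as the paper: the same cycle of implications $(i)\Rightarrow(ii)\Rightarrow(iii)\Rightarrow(i)$, the same orthogonal decomposition $y=\alpha x+h$ with $h\in x^{\perp}$, and the same appeal to Theorem \ref{theorem:preserve} for part (a) of $(i)\Rightarrow(ii)$. In fact your treatment of $(ii)\Rightarrow(iii)$ is more careful than the paper's, which silently uses $\|Tx\|=m(T)$ in the step $\bar{\alpha}\langle Tx,Tx\rangle=m^{2}(T)\langle x,\alpha x\rangle$; your sandwich argument (the Pythagorean identity obtained from (a) together with (b) applied to the normalized component in $x^{\perp}$, including the degenerate case $k=0$) supplies exactly the justification that the paper leaves implicit.
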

\begin{proof}
	First, we prove $(i)\Rightarrow(ii)$. Suppose $x\in m_T$. Let $ y \in H_1 $ and $ \langle x,y\rangle = 0.$ Then from Theorem \ref{theorem:preserve} it follows that $\langle Tx, Ty\rangle = 0$. Thus (a) holds. Again from  the definition of $m(T)$, it follows that inf $\{\|Ty\|\colon \|y\|=1, \langle x,y\rangle = 0\} \geq m(T)=\|Tx\|.$  Therefore, (b) holds. \smallskip	
	Next, we prove $(ii)\Rightarrow(iii)$. 
	Let $ y \in H_1.$ Then $ y = \alpha x + h $ for some scalar $\alpha$ and $ h \in H_1$ such that $\langle x,h \rangle = 0.$ Then $ \langle Tx, Th \rangle = 0$ and  $ \langle Tx, Ty \rangle = \langle Tx, \alpha Tx + Th \rangle = \bar{\alpha} \langle Tx, Tx \rangle = 		m^2(T) \langle  x,  \alpha x \rangle = m^2(T) \langle x, y \rangle.$ \smallskip
	Finally we prove $(iii)\Rightarrow(i)$. Let $x\in S_{\mathbb{H}_1}$ be such that $\langle Tx, Ty\rangle=m^2(T)\langle x,y\rangle$ for every $y\in \mathbb{H}_1.$ Taking $y=x$, we get, $\|Tx\|^2=m^2(T),$ which implies  that $x\in m_T.$  	This establishes the theorem. 	
	\end{proof}
\begin{remark}
	It follows from Theorem 2.2 of \cite{SP} that for a bounded linear operator $ T $ between Hilbert spaces, if $ M_T $ is non-empty then $M_T $ is always the unit sphere of some subspace of the domain space. Applying the parallelogram equality, it is easy to see that the same fact holds for $ m_T. $ In other words, $m_T $ is also the unit sphere of some subspace of the domain space, provided it is non-empty.
\end{remark}

In Theorem \ref{theorem:characterization}, we proved that for a bounded linear operator $T$ on a Hilbert space $\mathbb{H}_1$, $x\in m_T$ if and only if $\langle Tx,Ty\rangle= m^2(T)\langle x,y\rangle,$ for every $y\in \mathbb{H}_1$. Since $m_T $ is always the unit sphere of some subspace of $ \mathbb{H}_1 $, using this characterization of $m_T$, we have the following theorem.

\begin{theorem}\label{theorem:dimension}
	Let $\mathbb{H}_1, \mathbb{H}_2$ be Hilbert spaces and $ T \in \mathbb{L}(\mathbb{H}_1, \mathbb{H}_2).$ The dimension of the subspace, whose unit sphere is $m_T$, is equal to the geometric multiplicity of the least eigen value (which is equal to $m^2(T)$) of $T^*T.$ 
\end{theorem}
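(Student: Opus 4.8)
The plan is to leverage the characterization from Theorem~\ref{theorem:characterization}, namely that $x \in m_T$ if and only if $\langle Tx, Ty \rangle = m^2(T) \langle x, y \rangle$ for every $y \in \mathbb{H}_1$, together with the fact (noted in the preceding remark) that $m_T$ is the unit sphere of a subspace $V \subseteq \mathbb{H}_1$. First I would observe that $T^*T$ is a positive self-adjoint operator with $\langle T^*T z, z \rangle = \|Tz\|^2 \geq m^2(T)\|z\|^2$ for all $z$, so $m^2(T)$ is the infimum of the spectrum of $T^*T$; and since $m_T \neq \emptyset$ (the case $m_T = \emptyset$ making the dimension zero and there being no least eigenvalue, which I should either exclude or handle as a trivial edge case), this infimum is attained and hence is genuinely the least eigenvalue of $T^*T$.

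The core step is to identify $V$ with the eigenspace $E = \ker(T^*T - m^2(T)I)$. For one inclusion, if $x \in m_T$ then the characterization gives $\langle T^*T x, y \rangle = \langle Tx, Ty \rangle = m^2(T)\langle x, y\rangle$ for all $y$, hence $T^*T x = m^2(T) x$, so $x \in E$; since $V$ is the subspace generated by $m_T = S_V$, linearity gives $V \subseteq E$. For the reverse inclusion, if $x \in E$ is a unit vector then $\|Tx\|^2 = \langle T^*T x, x\rangle = m^2(T)$, so $x \in m_T = S_V$, hence $E \subseteq V$. Therefore $V = E$ and $\dim V = \dim E$, which is precisely the geometric multiplicity of the least eigenvalue $m^2(T)$ of $T^*T$.

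The step I expect to require the most care is the spectral-theoretic point that $m^2(T)$ is actually an \emph{eigenvalue} of $T^*T$ rather than merely a point of the spectrum, in the infinite-dimensional setting. This is exactly equivalent to $m_T$ being non-empty: if some unit $x$ realizes $\|Tx\| = m(T)$ then $x$ is an eigenvector as shown above, and conversely. So the theorem should be read under the standing hypothesis that $m_T \neq \emptyset$ (or, what is the same, that the infimum in $m(T)$ is attained), and I would state this explicitly. Everything else is a routine consequence of Theorem~\ref{theorem:characterization} and the self-adjointness of $T^*T$; no hard estimates are involved, only the translation between the bilinear identity $\langle Tx, Ty\rangle = m^2(T)\langle x, y\rangle$ and the operator identity $T^*T x = m^2(T) x$.
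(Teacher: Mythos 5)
Your proposal is correct and follows essentially the same route the paper intends: the paper's proof is only a one-line reference to the analogous argument for $M_T$ in \cite[Th.~2.2]{Sa}, which likewise translates the semi-inner-product identity of Theorem~\ref{theorem:characterization} into the eigenvalue equation $T^*Tx = m^2(T)x$ and identifies $m_T$ with the unit sphere of the corresponding eigenspace. Your explicit remark that $m_T \neq \emptyset$ is needed (and is equivalent to $m^2(T)$ actually being an eigenvalue rather than merely the bottom of the spectrum) is a worthwhile clarification that the paper's statement leaves implicit.
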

\begin{proof}
	The proof of the theorem can be easily completed by following the same line of arguments, as used in  \cite[Th. 2.2]{Sa} for $M_T$.
\end{proof}

We next obtain a complete characterization of the minimum norm attainment set of a bounded linear operator between any two normed linear spaces. Let us mention that the following result is analogous to Theorem \ref{theorem:characterization M_T}.

\begin{theorem}\label{theorem:characterization m_T}
	Let $\mathbb{X}, \mathbb{Y}$ be normed linear spaces and $T\in \mathbb{L}(\mathbb{X}, \mathbb{Y})$. Let $x\in S_{\mathbb{X}}$. Then $x\in m_T$ if and only if there exist two s.i.p. $[~,~]_\mathbb{X}$ and $[~,~]_\mathbb{Y}$ on $\mathbb{X}$ and $\mathbb{Y}$ respectively such that for any $y\in \mathbb{X}$,\\ 
	\[[Ty, Tx]_\mathbb{Y}= m^2(T)[ y, x]_\mathbb{X}.\]
	
\end{theorem}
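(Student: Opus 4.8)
The plan is to run the proof of Theorem \ref{theorem:characterization M_T} essentially verbatim, with Corollary \ref{cor:hyperplane2} taking over the role played there by Lemma \ref{lemma:hyperplane1}, and with $m(T)$ in place of $\|T\|$. The sufficiency requires no case distinction: if s.i.p.'s on $\mathbb{X}$ and $\mathbb{Y}$ satisfy $[Ty,Tx]_\mathbb{Y}=m^2(T)[y,x]_\mathbb{X}$ for all $y\in\mathbb{X}$, then putting $y=x$ gives $[Tx,Tx]_\mathbb{Y}=m^2(T)[x,x]_\mathbb{X}=m^2(T)\|x\|^2=m^2(T)$, so $\|Tx\|=m(T)$ and $x\in m_T$.

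For the necessity, I would first dispose of the degenerate case $m(T)=0$; this plays here the role of the hypothesis ``$T$ is the zero operator'' in Theorem \ref{theorem:characterization M_T}, except that now it merely says $T$ fails to be bounded below. If $m(T)=0$ and $x\in m_T$, then $Tx=0$, so for \emph{any} pair of s.i.p.'s one has $[Ty,Tx]_\mathbb{Y}=[Ty,0]_\mathbb{Y}=0=m^2(T)[y,x]_\mathbb{X}$, and there is nothing more to prove. So assume $m(T)>0$ and fix $x\in m_T$; then $\|Tx\|=m(T)>0$, in particular $Tx\ne 0$.

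Next, choose a support functional $\psi_x$ at $x$ and set $H_x=\ker\psi_x$, so $x\perp_B H_x$ by Theorem 2.1 of \cite{Ja}. By Corollary \ref{cor:hyperplane2} there is a hyperspace $H_y$ in $\mathbb{Y}$ with $Tx\perp_B H_y$ and $T(H_x)\subseteq H_y$; since $Tx\ne 0$ and $Tx\perp_B H_y$ we get $Tx\notin H_y$, so $\mathbb{Y}=\mathrm{span}\{Tx\}\oplus H_y$, and the functional $\psi_y$ determined by $\psi_y(Tx)=\|Tx\|$ and $\ker\psi_y=H_y$ has $\|\psi_y\|=1$ (because $Tx\perp_B H_y$). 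From here one proceeds exactly as in the proof of Theorem \ref{theorem:characterization M_T}: one builds s.i.p.'s $[z,u]_\mathbb{X}=f_u(z)$, $[w,v]_\mathbb{Y}=g_v(w)$ via norming functionals (with the scaling conventions $f_{\lambda u}=\bar\lambda f_u$, $g_{\lambda v}=\bar\lambda g_v$), making the special choices $f_x=\psi_x$ and $g_{Tx}=\|Tx\|\,\psi_y$ — the latter normalized so that $[Tx,Tx]_\mathbb{Y}=\|Tx\|^2=m^2(T)$ — and by \cite{G} these are genuine s.i.p.'s. Writing an arbitrary $y\in\mathbb{X}$ as $y=\alpha x+h$ with $\alpha\in\mathbb{K}$ and $h\in H_x$, one obtains $[y,x]_\mathbb{X}=\alpha\|x\|^2+\psi_x(h)=\alpha$ and, since $Th\in T(H_x)\subseteq H_y=\ker\psi_y$, $[Ty,Tx]_\mathbb{Y}=\alpha[Tx,Tx]_\mathbb{Y}+\|Tx\|\,\psi_y(Th)=\alpha\,m^2(T)=m^2(T)[y,x]_\mathbb{X}$, which is the asserted identity.

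The step I expect to be the real obstacle is precisely the appeal to Corollary \ref{cor:hyperplane2}: one needs a \emph{single} hyperspace $H_y$ that at once contains all of $T(H_x)$ and is Birkhoff--James orthogonal to $Tx$. In the operator-norm case this came for free in Lemma \ref{lemma:hyperplane1} by pulling a support functional at $Tx$ back through $T$; here no such pullback is available, and the existence of $H_y$ genuinely rests on the minimum-norm attainment of $T$ at $x$ through Theorem \ref{theorem:preserve}(iii), equivalently $T(x^{\perp})\subseteq(Tx)^{\perp}$. Everything after that invocation — that the set-functions are s.i.p.'s, the decomposition $y=\alpha x+h$, and the closing one-line computation — is routine and parallels the proof of Theorem \ref{theorem:characterization M_T}.
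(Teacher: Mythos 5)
Your proof is correct and follows essentially the same route as the paper: sufficiency by setting $y=x$, and necessity by disposing of the $m(T)=0$ case, invoking Corollary \ref{cor:hyperplane2}, and then repeating the s.i.p. construction from the proof of Theorem \ref{theorem:characterization M_T}. The paper leaves those final details implicit (``similar arguments''), and your explicit rescaling of the distinguished functional at $Tx$ by $\|Tx\|=m(T)$ is precisely the adjustment that makes the verbatim transfer work.
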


\begin{proof}
Let us first prove the sufficient part. Let $x\in S_{\mathbb{X}}$ such that for any $y\in \mathbb{X}$, 
$[Ty, Tx]_\mathbb{Y}= m^2(T)[ y, x]_\mathbb{X}.$ Taking $y=x$, we obtain, $[Tx, Tx]_\mathbb{Y}=m^2(T)$ , i.e., $\|Tx\|=m(T)$. However, this is clearly equivalent to the fact that $x\in m_T.$ \\
Let us now prove the necessary part. If $T$ is the zero operator, then it is clear that the theorem holds true. Suppose that $T$ is nonzero. Let $x\in m_T$. Let $y\in \mathbb{X}$ be arbitrary. If $m(T)=0$ then $\|Tx\|=0\Rightarrow Tx=0,$ and therefore, the theorem holds true. Suppose $m(T)>0$. Then applying Corollary \ref{cor:hyperplane2}, we can complete the proof of the theorem by using similar arguments, as done in the proof of Theorem \ref{theorem:characterization M_T}.
\end{proof}

\section{Relation between $M_T$ and $ m_T$ }

In this section we focus on studying the relation between $M_T$ and $m_T,$ both for bounded linear operators between Hilbert spaces as well as Banach spaces. We begin the study with a bounded linear operator $ T $ between Hilbert spaces $ \mathbb{H}_1 $ and $ \mathbb{H}_2. $ We note that in this case, both $M_T$ and $m_T$ are unit spheres of some subspaces of $ \mathbb{H}_1, $ provided they are non-empty. Indeed, our next theorem implies that these two subspaces are either identical or orthogonal to each other. We note that if $ T \in \mathbb L(\mathbb{H}_1,\mathbb{H}_2) $ is a scalar multiple of an isometry, then $ M_T = m_T = S_{\mathbb{X}}. $

\begin{theorem}\label{theorem:subset}
	Let $\mathbb{H}_1, \mathbb{H}_2$  be Hilbert spaces and let $T\in \mathbb L(\mathbb{H}_1,\mathbb{H}_2)$ be such that $T$ is not a scalar multiple of an isometry. Then $ m_T\subseteq (M_T)^\perp, $ provided both $ M_T $ and $ m_T $ are non-empty.
	\end{theorem}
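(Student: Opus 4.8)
The plan is to use the characterizations from Theorem~\ref{theorem:characterization} together with the spectral-theoretic interpretation from Theorem~\ref{theorem:dimension}. Recall that $M_T$ is the unit sphere of the eigenspace of $T^*T$ corresponding to its largest eigenvalue $\|T\|^2$, and $m_T$ is the unit sphere of the eigenspace of $T^*T$ corresponding to its smallest eigenvalue $m^2(T)$. So the statement $m_T \subseteq (M_T)^\perp$ amounts to the assertion that these two eigenspaces are orthogonal, which will follow from the self-adjointness of $T^*T$ once we know that $\|T\|^2 \neq m^2(T)$.

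First I would fix $x \in M_T$ and $x' \in m_T$, both unit vectors. Using Theorem~\ref{theorem:characterization M_T} (or directly the Hilbert space version from \cite{Sa}) we have $\langle Tx, Ty \rangle = \|T\|^2 \langle x, y\rangle$ for all $y \in \mathbb{H}_1$; using Theorem~\ref{theorem:characterization}(iii) we have $\langle Tx', Ty\rangle = m^2(T)\langle x', y\rangle$ for all $y \in \mathbb{H}_1$. Now I would compute $\langle Tx, Tx'\rangle$ in two ways: taking $y = x'$ in the first identity gives $\langle Tx, Tx'\rangle = \|T\|^2 \langle x, x'\rangle$, while taking $y = x$ in the second identity (after conjugating, or using $\langle Tx', Tx\rangle = m^2(T)\langle x', x\rangle$ and taking complex conjugates) gives $\langle Tx, Tx'\rangle = m^2(T)\langle x, x'\rangle$. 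Subtracting, $(\|T\|^2 - m^2(T))\langle x, x'\rangle = 0$.

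The remaining point is to show $\|T\|^2 \neq m^2(T)$, and this is where the hypothesis that $T$ is not a scalar multiple of an isometry enters. If $\|T\| = m(T)$, then $\|Tx\| = \|T\|\,\|x\|$ for every unit vector $x$, hence (by homogeneity) for every $x \in \mathbb{H}_1$, so $\frac{1}{\|T\|}T$ is an isometry and $T$ is a scalar multiple of an isometry, contradicting the hypothesis. (Here I should note $T \neq 0$, which is clear since otherwise $T$ is trivially a scalar multiple of an isometry in the degenerate sense, or one simply observes the hypothesis forces $T \neq 0$; in any case $m(T) \le \|T\|$ always, and equality is the isometry case.) Therefore $\|T\|^2 - m^2(T) \neq 0$, and we conclude $\langle x, x'\rangle = 0$. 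Since $x \in M_T$ was arbitrary, $x' \perp M_T$, i.e. $x' \in (M_T)^\perp$; since $x' \in m_T$ was arbitrary, $m_T \subseteq (M_T)^\perp$.

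I do not anticipate a serious obstacle here; the main subtlety is simply being careful with the conjugate-linearity conventions in the complex case when evaluating $\langle Tx, Tx'\rangle$ from both characterizations, and making sure the ``not a scalar multiple of an isometry'' hypothesis is invoked precisely to rule out $\|T\| = m(T)$. One could alternatively bypass the two s.i.p./inner-product identities and argue purely spectrally via Theorem~\ref{theorem:dimension}: the eigenspaces of the positive self-adjoint operator $T^*T$ for distinct eigenvalues $\|T\|^2$ and $m^2(T)$ are automatically orthogonal, and these eigenvalues are distinct exactly because $T$ is not a scalar multiple of an isometry. Both routes are short; I would present the inner-product computation as the primary argument since it is self-contained and only uses results already established in Section~2.
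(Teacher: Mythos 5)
Your argument is correct, but it follows a different route from the paper's. You read off the ``eigenvector'' identities from the two characterizations: $x\in M_T$ gives $\langle Tx,Ty\rangle=\|T\|^2\langle x,y\rangle$ for all $y$ (the Hilbert-space characterization of $M_T$ from \cite{Sa}, equivalently $T^*Tx=\|T\|^2x$), while $x'\in m_T$ gives $\langle Tx',Ty\rangle=m^2(T)\langle x',y\rangle$ via Theorem~\ref{theorem:characterization}(iii) (equivalently $T^*Tx'=m^2(T)x'$); evaluating $\langle Tx,Tx'\rangle$ both ways and using $\|T\|>m(T)$ (which is exactly what ``not a scalar multiple of an isometry'' rules out, as you note, after excluding $T=0$) yields $\langle x,x'\rangle=0$ directly --- in effect, orthogonality of eigenspaces of the self-adjoint operator $T^*T$ for distinct eigenvalues. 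The paper instead fixes $x\in M_T$, decomposes an arbitrary $y\in m_T$ as $y=\alpha x+h$ with $h\perp x$, invokes only Lemma~\ref{lemma:hyperplane1} to get $\langle Tx,Th\rangle=0$, and shows that $\alpha\neq 0$ forces $\|Ty\|^2=|\alpha|^2\|T\|^2+\|Th\|^2>m^2(T)$, contradicting $y\in m_T$. Your version is shorter and makes the structural reason transparent, at the cost of importing the $M_T$ characterization from \cite{Sa} (or specializing Theorem~\ref{theorem:characterization M_T} to the unique s.i.p.\ of a Hilbert space); the paper's version is a self-contained contradiction argument that only needs the weaker orthogonality-preservation statement of Lemma~\ref{lemma:hyperplane1} rather than the full eigenvector identity at points of $M_T$. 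Your conjugation step in the complex case and the disposal of $\|T\|=m(T)$ are handled correctly, so there is no gap.
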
	
	
\begin{proof}
 Let us observe that since $T$ is not a scalar multiple of an isometry, we must have, $\|T\|>m(T).$ 	Let $y\in m_T$ be arbitrary. Choose $ x \in M_T$, which is chosen arbitrarily but is kept fixed after choice.  Since every Hilbert space is smooth, there exists a unique hyperspace $H_x$  such that $x\perp_B H_x$.  It is easy to see that $ y $ can be written as $y=\alpha x + h$, where $h\in H_x$ and $\alpha $ is a scalar.  If $\alpha = 0 $ then clearly $ y = h \in (M_T)^\perp$. If possible, suppose that $ \alpha \neq 0.$ 	Now, $1=\|y\|^2 =\langle\alpha x + h, \alpha x + h\rangle= |\alpha|^2 + \|h\|^2,$ since $\langle x,h\rangle=0.$ Moreover, from Lemma \ref{lemma:hyperplane1}, it follows that  $\langle Tx, Th\rangle=0.$  Now, we have,\\
	\begin{eqnarray*}
		\|Ty\|^2&=&\langle\alpha Tx + Th, \alpha Tx + Th\rangle\\
		&=&|\alpha|^2\|Tx\|^2 + \|Th\|^2\\
		&=&|\alpha|^2\|T\|^2 + \|h\|^2\|T(\frac{h}{\|h\|})\|^2\\
		&>&|\alpha|^2 m^2(T) + \|h\|^2m^2(T)\\
		&=&(|\alpha|^2 + \|h\|^2) m^2(T)=m^2(T).
	\end{eqnarray*}
	
However, this clearly contradicts  that $y\in m_T$. Therefore, we must have $\alpha=0$. Thus, for each $ x \in M_T, $ we get $ \langle y, x \rangle = 0 $ and so $ y \in (M_T)^{\bot}.$ As $ y \in m_T$ was chosen arbitrarily, this completes the proof of the theorem. 
	
\end{proof}

In particular, for a linear operator $T$ on a finite-dimensional Hilbert space $ \mathbb{H}, $ we have that either $ M_T = m_T = S_{\mathbb{H}} $ or  $ M_T\perp_B m_T $ and $ m_T\perp_B M_T. $ However, this is not true in general for a bounded linear operator between Banach spaces. Let us furnish the following two examples to illustrate the scenario. 

\begin{example}
	Consider $ (\mathbb {R}^2,\|\|), $ whose unit sphere is given by the regular hexagon with vertices at $ \pm (1,0), \pm (\frac{1}{2},\frac{\sqrt{3}}{2}),\pm (-\frac{1}{2},\frac{\sqrt{3}}{2}). $  \\
It is quite straightforward to observe that Birkhoff-James orthogonality is symmetric for this Banach space, though it is not an inner product space.\\
	Consider the linear operator $ T = $
	\(
	\left( 
	\begin{array}{cc}
	1 & 0 \\ 
	0 & 0
	\end{array} 
	\right)
	\) on this space. \\
	It follows immediately that $ \|T\|=1 $ and $ m(T)=0 $. It also follows that $ M_T = \{ \pm (1,0) \} $ and $ m_T = \{\pm (0, \frac{\sqrt{3}}{2}) \}. $ In this case, we indeed have, $M_T\perp_B m_T$ and $m_T\perp_B M_T$.
\end{example}

\begin{example}
Consider the same Banach space, as given in the previous example.

    Let $ T = $
	\(
	\left( 
	\begin{array}{cc}
	\frac{3}{4} & -\frac{\sqrt{3}}{4} \\ 
	\frac{\sqrt{3}}{4} & \frac{3}{4}
	\end{array} 
	\right)
	\).\\
	It follows immediately that $ \|T\|=1 $ and $m(T)=\frac{3}{4}$. It is also easy to check that $ \pm (1,0), \pm (\frac{1}{2},\frac{\sqrt{3}}{2}),\pm (-\frac{1}{2},\frac{\sqrt{3}}{2})\in M_T$ and $\pm (\frac{3}{4},\frac{\sqrt {3}}{4}), \pm (0, \frac{\sqrt{3}}{2}), \pm (-\frac{3}{4},\frac{\sqrt {3}}{4})\in m_T$. Therefore, in this case, $M_T\not\perp_B m_T.$\\
	
\end{example}

In the next theorem, we study the  norm (minimum norm) attainment set of a rank one linear operator on a (strictly convex) reflexive Banach space. As we will observe, this will lead us to an interesting characterization of reflexivity, in terms of these two sets.

\begin{theorem}\label{theorem:rank1}
	 Let $\mathbb{X}$ be a  reflexive  real Banach space and $Y$ be a real Banach space. Let $T\in \mathbb{L}(\mathbb{X}, \mathbb{Y})$ be a rank one linear operator. Then $x \in M_T$ for some $ x \in S_{\mathbb{X}} $ and   $m_T=H_x\bigcap S_{\mathbb{X}}$, where $H_x$ is a hyperspace of $\mathbb{X}$ such that $x\perp_B H_x$. In addition, if $\mathbb{X}$ is strictly convex, then $M_T=\{\pm x\}.$
\end{theorem}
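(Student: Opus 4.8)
The plan is to exploit the rank-one structure of $T$ so as to reduce the whole statement to a fact about a single norming functional. Write $Tz = f(z)\,y_0$ for a fixed nonzero $f \in \mathbb{X}^*$ and a fixed $y_0 \in \mathbb{Y}$; after rescaling we may assume $\|y_0\| = 1$, so that $\|Tz\| = |f(z)|$ for every $z \in \mathbb{X}$, $\|T\| = \|f\|$, and $m(T) = \inf\{\,|f(z)| : z \in S_{\mathbb{X}}\,\}$. We may also assume $\dim \mathbb{X} \ge 2$, the one-dimensional case being degenerate (there $M_T = m_T = S_{\mathbb{X}}$ and $\mathbb{X}$ has no proper hyperspace).

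First I would produce the vector $x$. Since $\mathbb{X}$ is reflexive, $B_{\mathbb{X}}$ is weakly compact and $f$ is weakly continuous, hence $f$ attains its supremum over $B_{\mathbb{X}}$; as $f \ne 0$, the maximizer lies on $S_{\mathbb{X}}$, so $f(x) = \|f\|$ for some $x \in S_{\mathbb{X}}$, and then $\|Tx\| = \|f\| = \|T\|$, i.e. $x \in M_T$. Set $H_x := \ker f$, a hyperspace of $\mathbb{X}$. For every $h \in H_x$ and every scalar $\lambda$ one has $\|f\|\,\|x + \lambda h\| \ge |f(x + \lambda h)| = |f(x)| = \|f\|$, so $\|x + \lambda h\| \ge 1 = \|x\|$; thus $x \perp_B H_x$.

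Next I would identify $m_T$. Since $\dim \mathbb{X} \ge 2$, the hyperspace $\ker f$ contains a unit vector $v$, and $Tv = f(v)y_0 = 0$, so $m(T) = 0$; therefore $m_T = \{\, z \in S_{\mathbb{X}} : Tz = 0 \,\} = \{\, z \in S_{\mathbb{X}} : f(z) = 0 \,\} = H_x \cap S_{\mathbb{X}}$, as claimed. For the strict-convexity addendum, take $z \in M_T$ and, after replacing $z$ by $-z$ if necessary, assume $f(z) = \|f\| = f(x)$. Then $f\big(\tfrac12(x+z)\big) = \|f\|$ gives $\|\tfrac12(x+z)\| \ge 1$, while the triangle inequality gives $\|\tfrac12(x+z)\| \le 1$; hence $\|x+z\| = 2 = \|x\| + \|z\|$, and strict convexity yields $z = kx$ with $k \ge 0$, forcing $k = \|z\| = 1$ and $z = x$. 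Thus $M_T \subseteq \{\pm x\}$, and the reverse containment is clear.

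All the estimates above are routine; the one place where the hypotheses are genuinely used — and the step I would flag as the heart of the argument — is the appeal to reflexivity to ensure that $f$ attains its norm, which is what makes both $x \in M_T$ and the hyperspace description of $m_T$ available. A minor care-point, dispatched by the dimension reduction at the start, is the convention that a hyperspace is a proper closed subspace of codimension one.
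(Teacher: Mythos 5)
Your proof is correct, but it takes a genuinely different route from the paper. You factor the rank-one operator explicitly as $Tz=f(z)y_0$ and reduce the whole theorem to statements about the single functional $f$: reflexivity gives norm attainment of $f$ (weak compactness of $B_{\mathbb{X}}$, or James' theorem), you verify $x\perp_B \ker f$ directly from the inequality $\|f\|\,\|x+\lambda h\|\ge |f(x+\lambda h)|$, you get $m_T=\ker f\cap S_{\mathbb{X}}$ because $T$ vanishes exactly on $\ker f$, and the strict-convexity addendum becomes the standard uniqueness of the norming point of a functional on a strictly convex space. The paper instead stays inside its general framework: it quotes norm attainment of a rank-one operator on a reflexive space (the same fact you prove, merely asserted), invokes Lemma \ref{lemma:hyperplane1} to produce hyperspaces $H_x,H_y$ with $x\perp_B H_x$, $Tx\perp_B H_y$ and $T(H_x)\subseteq H_y$, and then argues that $Tz\ne 0$ for some $z\in H_x$ would make $\{Tx,Tz\}$ linearly independent, contradicting rank one; for the last part it uses that strict convexity together with $x\perp_B h$, $h\ne 0$, forces $\|\alpha x+h\|>|\alpha|$. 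Your argument is more elementary and self-contained (no appeal to Lemma \ref{lemma:hyperplane1} or to the functional characterization of Birkhoff--James orthogonality as a black box), and you also dispose of the degenerate one-dimensional case, which the paper silently ignores and in which the stated identity $m_T=H_x\cap S_{\mathbb{X}}$ actually fails; the paper's route, on the other hand, exhibits $H_x$ as the hyperplane coming from its lemma, which keeps the proof aligned with the machinery used throughout the rest of the paper (e.g.\ in Theorem \ref{theorem:reflexive}).
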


\begin{proof} Without loss of generality, we may and do assume that $ \| T \| = 1. $ Since $T$ is a rank one operator on a reflexive space so $T$ attains norm at some element $x \in S_{\mathbb{X}}. $ Let $ y = Tx.$ Then by Lemma \ref{lemma:hyperplane1}, there exist hyperspaces $H_x$ and $H_y$ in $\mathbb{X}$ and $\mathbb{Y}$ respectively such that $ x \bot_B H_x, Tx \bot_B H_y $ and $ T(H_x) \subset H_y.$ We further note that $ Tx \neq 0. $
We claim that $ Tz = 0$ for all $z \in H_x.$ If not, then as $Tx \bot_B Tz,$ we must have, $\{Tx,Tz\}$ is linearly independent in $\mathbb{Y}.$ However, this implies that the rank of $T$ is more than one, a contradiction to our hypothesis. Thus, $ Tz =0 $ for all $z \in H_x$ and so $ H_x \cap S_{\mathbb{X}} \subset m_T.$ Next, let $z \in m_T.$ Then $ z = \alpha x + h $ for some scalar $\alpha$ and $ h \in H_x$. Clearly, $ 0 = Tz = \alpha Tx + Th = \alpha Tx,$ so that $\alpha = 0 $ and hence $ z = h \in
H_x \cap S_{\mathbb{X}}.$ Thus, $ m_T \subset H_x \cap S_{\mathbb{X}}.$ This proves that $m_T=H_x\bigcap S_{\mathbb{X}}, $ and completes the proof of the fist part of the theorem.
\smallskip
 
	Next, assume that $ \mathbb{X}$ is strictly convex. We show that $M_T=\{\pm x\}$. Clearly, $w \in S_{\mathbb{X}}$ can be written as $w=\alpha x + h, $ for some scalar $\alpha $ and some $  h\in H_x.$ Since $\mathbb{X}$ is strictly convex and  $x\perp_B h$, we have, $1=\|w\|=\|\alpha x + h\| \geq |\alpha|$ and $|\alpha|=1$ if and only if $ h=0$. Now, $\|Tw\|=\|T(\alpha x + h)\|=|\alpha|\|Tx\|=|\alpha|\leq 1$ and equality holds if and only if $h=0$. Therefore, we must have $M_T=\{\pm x\}$. This completes the proof of the theorem.
	
\end{proof}

Now, the promised characterization of reflexive Banach spaces:

\begin{theorem}\label{theorem:reflexive}
	Let $\mathbb{X}$ be a real Banach space. Then $\mathbb{X}$ is reflexive if and only if for any closed hyperspace $H$ of $\mathbb{X}$, there exists a rank one linear operator $T\in \mathbb{L}(\mathbb{X}) $ such that \\
	(i) $ x \in M_T $, for some $x \in S_{\mathbb{X}}.$\\
	(ii) $m_T= H \bigcap S_{\mathbb{X}}.$ 
\end{theorem}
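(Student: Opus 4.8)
The plan is to deduce both implications from the classical theorem of James, namely that a real Banach space is reflexive precisely when every bounded linear functional on it attains its norm on the unit ball, together with the elementary structure of rank one operators. I assume $\dim\mathbb X\ge 2$ throughout, the lower-dimensional case being trivial and best dispatched with a one-line remark. The basic observation I will use repeatedly is this: a rank one $T\in\mathbb L(\mathbb X)$ has the form $Tz=g(z)v$ for some nonzero $g\in\mathbb X^*$ and some $v\in S_{\mathbb X}$, and then $\|T\|=\|g\|$ and $m(T)=\inf_{\|z\|=1}|g(z)|=0$ (choose a unit vector in the hyperspace $\ker g$), hence $m_T=\{z\in S_{\mathbb X}:g(z)=0\}=\ker g\cap S_{\mathbb X}$, while $M_T=\{z\in S_{\mathbb X}:|g(z)|=\|g\|\}$ is nonempty exactly when $g$ attains its norm.

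For the forward implication, let $\mathbb X$ be reflexive and let $H$ be a closed hyperspace, say $H=\ker f$ with $f\in S_{\mathbb X^*}$. Since $B_{\mathbb X}$ is weakly compact and $f$ is weakly continuous, $f$ attains its norm at some $x_0\in S_{\mathbb X}$. I then take $T\in\mathbb L(\mathbb X)$ defined by $Tz=f(z)x_0$; this is rank one, and by the observation above $\|T\|=1$, $x_0\in M_T$, and $m_T=\ker f\cap S_{\mathbb X}=H\cap S_{\mathbb X}$, so both (i) and (ii) hold. (Alternatively one may simply quote Theorem~\ref{theorem:rank1} for this particular $T$.)

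For the converse, by James's theorem it is enough to show that every $f\in S_{\mathbb X^*}$ attains its norm. Fix such an $f$ and apply the hypothesis to the closed hyperspace $H=\ker f$: there is a rank one $T\in\mathbb L(\mathbb X)$, written $Tz=g(z)v$ with $v\in S_{\mathbb X}$, such that $m_T=H\cap S_{\mathbb X}$ and $x\in M_T$ for some $x\in S_{\mathbb X}$. By the observation, $m_T=\ker g\cap S_{\mathbb X}$, so $\ker g\cap S_{\mathbb X}=\ker f\cap S_{\mathbb X}$; since each hyperspace is the linear span of its intersection with $S_{\mathbb X}$, this gives $\ker g=\ker f$, whence $g=\lambda f$ for some scalar $\lambda\ne 0$. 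Finally $x\in M_T$ means $|g(x)|=\|g\|$, that is $|f(x)|=\|f\|$, so $f$ attains its norm. Hence $\mathbb X$ is reflexive.

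The real content sits in two places: the elementary fact that for a rank one operator $m_T$ is exactly $\ker g\cap S_{\mathbb X}$ (which is where $\dim\mathbb X\ge 2$ enters, guaranteeing $m(T)=0$), and the realization that condition (i) is precisely what is needed to transfer norm attainment back from the constructed operator to the functional $f$. I do not expect a genuine obstacle; the only points demanding care are the normalization bookkeeping relating $\|T\|$, $\|g\|$ and $\|v\|$, the passage from equality of sphere-intersections to equality of hyperspaces, and invoking the reflexivity form of James's theorem rather than the Birkhoff--James orthogonality result already cited in the excerpt.
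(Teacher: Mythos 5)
Your argument is correct, and its skeleton is the same as the paper's: in the forward direction build the obvious rank one operator associated with the hyperspace $H$, and in the converse use the hypotheses to show that a functional defining $H$ attains its norm and then invoke James's theorem. The difference is in the tools. The paper phrases everything through Birkhoff--James orthogonality: reflexivity is used (via \cite{Jam}, combined with \cite{Ja}) in the form ``for every closed hyperspace $H$ there is $x\in S_{\mathbb{X}}$ with $x\perp_B H$'', the operator is defined by the decomposition $\mathbb{X}=\mathbb{R}x\oplus H$, the identification $m_T=H\cap S_{\mathbb{X}}$ is quoted from Theorem \ref{theorem:rank1}, and in the converse the transfer of norm attainment from $T$ back to the hyperplane is done by Proposition 2.1 of \cite{S} ($x\in M_T$ and $Tx\perp_B Th$ imply $x\perp_B h$), after which \cite{Jam} is applied again. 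You instead work with the representation $Tz=g(z)v$ of a rank one operator, note $\|T\|=\|g\|$, $m(T)=0$ and $m_T=\ker g\cap S_{\mathbb{X}}$ by direct computation, recover $\ker g=\ker f$ from the equality of sphere intersections, and read off $|f(x)|=\|f\|$ from $x\in M_T$; the forward direction uses weak compactness of $B_{\mathbb{X}}$ rather than the orthogonality form of James's theorem. The two routes are equivalent (James's correspondence $x\perp_B\ker f \Leftrightarrow |f(x)|=\|f\|\,\|x\|$ translates one into the other), but yours is more self-contained, avoiding Theorem \ref{theorem:rank1} and the external Proposition 2.1 of \cite{S} in the converse, at the cost of losing the orthogonality viewpoint the paper is promoting. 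One caveat you share with the paper: the case $\dim\mathbb{X}=1$ is not quite ``trivial'' as stated, since there the only hyperspace is $\{0\}$ and $m_T$ is never empty, so the equivalence should really be read for $\dim\mathbb{X}\geq 2$; this is a degenerate point and does not affect the substance of either argument.
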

\begin{proof}
	We first prove the necessary part. Since $\mathbb{X}$ is reflexive, it follows from \cite{Jam} that for any closed hyperspace $H$ of $\mathbb{X},$  there exists a unit vector $x\in \mathbb{X}$ such that $x\perp_B H.$ Clearly, any element $z\in \mathbb{X}$ can be written as $z=\alpha x + h$, where $\alpha \in \mathbb{R},~ h\in H.$ Define $T : \mathbb{X} \longrightarrow \mathbb{Y} $ as $T (\alpha x + h) = \alpha y ,$ where $y \in S_{\mathbb{X}} $ is fixed.  Clearly, $T$ is well-defined and $ T $ is a rank one linear operator. Since $x\perp_B H$, it is easy to check that is $T$ bounded and $x \in M_T.$ So from Theorem \ref{theorem:rank1} it follows that $m_T= H\bigcap S_{\mathbb{X}}.$ This completes the proof of the necessary part.
We next prove the sufficient part. Let $H$ be a closed hyperspace of $\mathbb{X}.$ According to our hypothesis, there exists a rank one linear operator $T\in \mathbb{L}(\mathbb{X})$ such that \\
	$(i)~ x \in M_T, $ for some $x \in S_{\mathbb{X}}.$\\
	$(ii)~ m_T= H \bigcap S_{\mathbb{X}}.$\\
Since rank of $ T $ is one, it is immediate that $ m(T) = 0. $ Since $m_T= H \bigcap S_{\mathbb{X}}, $ it follows that $ Th = 0 $ for all $ h \in H. $ In particular, we have that $ x \in M_T $ and $ Tx \perp_{B} Th $ for all $ h \in H. $ Applying Proposition 2.1 of \cite{S}, it now follows that  $ x \perp_{B} h $ for all $ h \in H, $ i.e., $ x \perp_{B} H. $ Thus, for each closed hyperspace $H$ of $ \mathbb{X}, $ there exists an element $ x \in S_{\mathbb{X}}$ such that $ x \bot_B H. $ Therefore, it follows from \cite{Jam} that $ \mathbb{X} $ is reflexive. This completes the proof of the sufficient part and establishes the theorem in its entirety. 

\end{proof}

In addition, if we assume that the space $\mathbb{X}$ is strictly convex, then we have the following theorem, the proof of which follows trivially from the previous theorem and the last part of Theorem \ref{theorem:rank1}. 

\begin{theorem}\label{theorem:reflexive2}
	Let $\mathbb{X}$ be a strictly convex real Banach space and $ \mathbb{Y}$ be a real Banach space. Then $\mathbb{X}$ is reflexive if and only if for any closed hyperspace $H$ of $\mathbb{X}$, there exists a rank one linear operator $T\in \mathbb{L}(\mathbb{X}, \mathbb{Y})$ such that \\
	(i) $  M_T = \{ \pm  x \}$, for some $x \in S_{\mathbb{X}}.$\\
	(ii) $m_T= H \bigcap S_{\mathbb{X}}.$ 
\end{theorem}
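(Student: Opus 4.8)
The plan is to obtain this theorem as an immediate consequence of Theorem \ref{theorem:reflexive} together with the concluding (strict convexity) part of Theorem \ref{theorem:rank1}; no genuinely new argument is needed, only a careful matching of hypotheses.

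First, for the necessity, I would assume $\mathbb{X}$ reflexive and fix an arbitrary closed hyperspace $H$ of $\mathbb{X}$. By the Birkhoff-James characterization of reflexivity \cite{Jam}, there exists $x\in S_{\mathbb{X}}$ with $x\perp_B H$, so that each $z\in\mathbb{X}$ is uniquely of the form $z=\alpha x+h$ with $\alpha\in\mathbb{R}$, $h\in H$. Exactly as in the proof of Theorem \ref{theorem:reflexive}, the map $T(\alpha x+h)=\alpha y_0$, for a fixed $y_0\in S_{\mathbb{Y}}$, is a well-defined bounded rank one operator with $\|T\|=1$ and $x\in M_T$. Theorem \ref{theorem:rank1} then yields $m_T=H\cap S_{\mathbb{X}}$, which is (ii); and since $\mathbb{X}$ is now additionally assumed strictly convex, the last sentence of Theorem \ref{theorem:rank1} sharpens $M_T$ to $\{\pm x\}$, which is (i).

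Next, for the sufficiency, I would observe that the present hypothesis formally implies the hypothesis of the sufficiency direction of Theorem \ref{theorem:reflexive}: if for every closed hyperspace $H$ of $\mathbb{X}$ there is a rank one $T\in\mathbb{L}(\mathbb{X},\mathbb{Y})$ with $M_T=\{\pm x\}$ and $m_T=H\cap S_{\mathbb{X}}$, then in particular $x\in M_T$ for some $x\in S_{\mathbb{X}}$ while $m_T=H\cap S_{\mathbb{X}}$, so conditions (i) and (ii) of Theorem \ref{theorem:reflexive} are satisfied. Invoking that theorem, $\mathbb{X}$ is reflexive, which completes the argument.

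There is essentially no obstacle here; the only point deserving a second look — and this is where whatever difficulty there is resides — is verifying that the operator constructed in the necessity part genuinely falls within the scope of Theorem \ref{theorem:rank1}, namely that it has rank one and that the ambient space is simultaneously reflexive and strictly convex, so that both the identification $m_T=H\cap S_{\mathbb{X}}$ and the sharpening $M_T=\{\pm x\}$ apply. Everything else is routine bookkeeping.
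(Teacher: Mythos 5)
Your proposal is correct and is essentially the paper's own argument: the paper derives this theorem directly from Theorem \ref{theorem:reflexive} together with the strict convexity conclusion of Theorem \ref{theorem:rank1}, exactly as you do. The only point worth noting is that in the sufficiency direction Theorem \ref{theorem:reflexive} is stated for $T\in\mathbb{L}(\mathbb{X})$ while here $T\in\mathbb{L}(\mathbb{X},\mathbb{Y})$, but its proof (rank one gives $m(T)=0$, $Th=0$ on $H$, then Proposition 2.1 of \cite{S} and James's criterion) never uses the codomain, so the argument carries over verbatim — a gloss the paper itself also makes.
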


Our next objective is to characterize Euclidean spaces among finite-dimensional real Banach spaces, in terms of the norm attainment set and the minimum norm attainment set of bounded linear operators on them. We first prove the following result for two-dimensional strictly convex Banach spaces.

\begin{theorem}\label{theorem:ips dim 2}
	A two-dimensional strictly convex real Banach space $\mathbb{X}$ is an inner product space if and only if for any $T \in \mathbb{L}(\mathbb{X})$, either (a) or (b) holds:\\
	(a) $M_T = m_T = S_{\mathbb{X}}$. \\
	(b) $M_T \perp_B m_T$ and $m_T \perp_B M_T.$	
\end{theorem}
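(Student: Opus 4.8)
The plan is to prove the two implications separately; the forward one is short and essentially all the difficulty sits in the converse.

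\textbf{Necessity.} Suppose $\mathbb{X}$ is an inner product space (hence a finite--dimensional Hilbert space) and let $T\in\mathbb{L}(\mathbb{X})$. If $T$ is a scalar multiple of an isometry then $\|Tu\|=\|T\|\,\|u\|$ for all $u$, so $M_T=m_T=S_{\mathbb{X}}$ and (a) holds. Otherwise, since $\dim\mathbb{X}<\infty$ both $M_T$ and $m_T$ are non--empty, so Theorem~\ref{theorem:subset} applies and gives $m_T\subseteq(M_T)^{\perp}$. As Birkhoff--James orthogonality is symmetric in an inner product space, this yields both $M_T\perp_B m_T$ and $m_T\perp_B M_T$, i.e.\ (b). (Alternatively, diagonalising $T^*T$ shows that $M_T$ and $m_T$ are the unit spheres of the eigenspaces of its largest and smallest eigenvalue, which are distinct, and hence orthogonal, unless $T^*T$ is a scalar.)

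\textbf{Sufficiency, Step 1: symmetry of Birkhoff--James orthogonality.} Assume now that for every $T\in\mathbb{L}(\mathbb{X})$ either (a) or (b) holds. Using only rank one operators one extracts that $\perp_B$ is symmetric on $\mathbb{X}$. Indeed, fix $x,z\in S_{\mathbb{X}}$ with $x\perp_B z$; then $\{x,z\}$ is a basis (if $x=tz$ then $x\perp_B z$ fails), and for the closed hyperspace $H=\mathrm{span}\{z\}$ the construction in the proof of Theorem~\ref{theorem:reflexive} gives a bounded rank one operator $T(\alpha x+h)=\alpha y$ ($h\in H$, $y\in S_{\mathbb{X}}$ fixed) with $x\in M_T$; by Theorem~\ref{theorem:rank1} (applicable since $\mathbb{X}$ is reflexive and strictly convex) $M_T=\{\pm x\}$ and $m_T=H\cap S_{\mathbb{X}}=\{\pm z\}$. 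Because $T$ has rank one, $m(T)=0<\|T\|$, so (a) fails and (b) must hold; in particular $m_T\perp_B M_T$ gives $z\perp_B x$. Hence $x\perp_B z\Rightarrow z\perp_B x$ for all $x,z$, so $\mathbb{X}$ is a Radon plane; being strictly convex, it is then automatically smooth (Radon planes are self--dual), so every unit vector has, up to sign, a unique Birkhoff--James orthogonal unit vector.

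\textbf{Sufficiency, Step 2: from a Radon plane to an inner product space.} In dimension at least three, symmetry of $\perp_B$ already forces an inner product space; but there exist strictly convex Radon planes that are not Euclidean, so in dimension two one must also use the hypothesis on rank two operators. The natural target is Ficken's criterion --- a normed space is an inner product space iff for all $x,y$ with $\|x\|=\|y\|$ the linear swap $x\mapsto y,\ y\mapsto x$ is an isometry --- or, equivalently, the parallelogram law, or transitivity of the isometry group on $S_{\mathbb{X}}$ (which for a finite--dimensional space forces an inner product space: average the Euclidean inner product over the compact isometry group; its unit sphere is then a single orbit on which the averaged norm is constant, hence coincides with $S_{\mathbb{X}}$). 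The plan is to argue by contradiction. If $\mathbb{X}$ is not Euclidean, fix $x_0\in S_{\mathbb{X}}$ with its orthogonal unit partner $y_0$, and consider a one--parameter family of invertible, norm--one operators $T_t$ adapted to the basis $\{x_0,y_0\}$ (for instance, suitably normalised shears), none of which is a scalar multiple of an isometry. Strict convexity and smoothness should force, for each $t$, that $M_{T_t}=\{\pm u(t)\}$ and $m_{T_t}=\{\pm v(t)\}$ are single antipodal pairs depending continuously on $t$; then (b) gives $u(t)\perp_B v(t)$ and $v(t)\perp_B u(t)$, while Lemma~\ref{lemma:hyperplane1} at $u(t)$ and Theorem~\ref{theorem:preserve} at $v(t)$ pin down $T_tu(t)$ and $T_tv(t)$ relative to their orthogonal complements. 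Matching these two descriptions as $t$ varies should yield a norm identity on $\mathbb{X}=\mathrm{span}\{x_0,y_0\}$ equivalent to the parallelogram law, contradicting the assumption.

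\textbf{The main obstacle.} The whole weight of the proof --- and the reason strict convexity and the exceptional status of dimension two are needed --- lies in Step 2: converting ``(b) holds for \emph{every} $T$'' into enough rigidity to rule out non--Euclidean strictly convex Radon planes. Symmetry of $\perp_B$ by itself provably does not suffice, so one must genuinely use the position of $M_T$ and $m_T$ for \emph{non--diagonal} rank two operators. The delicate points are (i) controlling $M_{T_t}$ and $m_{T_t}$ --- in particular, showing that they stay single antipodal pairs (in dimension two $M_T$ need not be a single pair even for well--behaved $T$) and that they vary continuously --- and (ii) distilling a clean identity, such as the parallelogram law or transitivity of the isometry group, from the resulting family of orthogonality relations. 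Once such a global statement is in hand, recognising $\mathbb{X}$ as an inner product space is routine.
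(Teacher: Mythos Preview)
Your necessity argument is fine and matches the paper. In the sufficiency, your Step~1 (symmetry of $\perp_B$ via rank one operators) is correct, but your Step~2 is not a proof: you describe a program with one--parameter families $T_t$, list its ``delicate points'', and hope that matching orthogonality relations will produce the parallelogram law. None of this is carried out. You yourself note that symmetry of $\perp_B$ does not rule out non--Euclidean strictly convex Radon planes, so the entire burden of the theorem rests on Step~2, which is left as an outline. In particular, you have no argument showing that $M_{T_t}$ and $m_{T_t}$ remain single antipodal pairs, no continuity argument, and no mechanism that extracts a norm identity from the family of relations; as written the proposal is incomplete.

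The paper avoids all of this with a short structural argument. The key point is that in a two--dimensional strictly convex real space, Birkhoff--James orthogonality is \emph{left additive} (James): if $x\perp_B z$ and $y\perp_B z$ then $\alpha x+\beta y\perp_B z$. Assume (b) holds for some $T$ and suppose $M_T$ contains two linearly independent unit vectors $x,y$. Pick $z\in m_T$; then $z\notin\{\pm x,\pm y\}$, so $z=\alpha x+\beta y$ with $\alpha,\beta\neq 0$. From (b) we have $x\perp_B z$ and $y\perp_B z$, hence by left additivity $z=\alpha x+\beta y\perp_B z$, forcing $z=0$, a contradiction. Therefore, whenever (b) holds, $M_T=\{\pm x\}$; if (a) holds, $M_T=S_{\mathbb{X}}$. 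In either case $M_T=\pm D$ with $D$ connected, for every $T\in\mathbb{L}(\mathbb{X})$. The characterization of Sain--Paul \cite[Th.~2.2]{SP} then gives that $\mathbb{X}$ is an inner product space. This route uses the full hypothesis directly (the relation $M_T\perp_B m_T$ for \emph{each} $T$) rather than trying to reconstruct the norm from a family of operators, and strict convexity enters exactly through left additivity.
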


\begin{proof}
	Let us first prove the necessary part. Let $ \mathbb{X} $ be the two-dimensional Euclidean space. Let $T \in \mathbb{L}(\mathbb{X}).$   If $T$ is a scalar multiple of an isometry then $M_T = m_T = S_X$. On the other hand, if $T$ is not a scalar multiple of an isometry then it follows from Theorem \ref{theorem:subset} that $M_T \perp_B m_T$ and $m_T \perp_B M_T.$ This completes the proof of the necessary part of the theorem. Let us now prove the sufficient part. We first claim that for any $T \in L(\mathbb{X}), $  $M_T= \pm D$, where $D$ is a connected subset of $S_{\mathbb{X}}.$ If (a) holds, i.e., $M_T = S_{\mathbb{X}},$ then our claim is trivially true. 
	Next, suppose $(b)$ holds. We show that $T$ attains norm at only one pair of points. If possible, suppose that $x$, $y \in M_T$, where $x\neq \pm y. $ Let $z\in m_T$. Clearly, $ z \neq \pm x,~ \pm y, $ as $ T $ is not a scalar multiple of an isometry. Therefore, $z$ can be written as $z=\alpha x +\beta y$, where $\alpha ,\beta $ are  non-zero scalars. We have, $x \perp_B z$ and $y \perp_B z$. Since $\mathbb{X} $ is a two-dimensional strictly convex Banach space, it follows from \cite{J} that Birkhoff-James orthogonality is left additive in $ \mathbb{X}. $ Therefore, applying the homogeneity property of Birkhoff-James orthogonality, it follows that $\alpha x +\beta y\perp_B z$, i.e., $z\perp_B z$, which is possible only if $ z=0. $ However, this clearly contradicts that $ z \in m_T \subset S_{\mathbb{X}}. $ This completes the proof of the fact that if $ (b) $ holds then $T$ must attain norm only at one pair of points. Therefore, in any case, $M_T= \pm D$, where $D$ is a connected subset of $S_{\mathbb{X}}.$ It now follows from \cite[Th. 2.2]{SP} that $\mathbb{X}$ is an inner product space. This completes the proof of the sufficient part of the theorem and thereby establishes the theoerem. \\ 
\end{proof}

\begin{remark}
Let $\mathbb{X}$ be a two-dimensional real Banach space which is not strictly convex. Then the unit sphere of $S_{\mathbb{X}}$ contains a line segment $L$(say). Let $x\in L.$ It is easy to see that there exists $y\in S_{\mathbb{X}}$ such that every point of $ L $ is Birkhoff-James orthogonal to $ y. $ Let us define a linear operator $T$ on $ \mathbb{X} $ in the following way: $ Tx=x, Ty=0.$ It follows trivially that $M_T = \pm L $ and $ m_T = \{ \pm y \}. $ It is also immediate that $M_T\perp_B m_T$ but $m_T$ may not be always  Birkhoff-James orthogonal to $M_T$. In particular, if we further assume $ \mathbb{X} $ to be smooth, then it follows that $ \mathbb{X} $ is not an inner product space. 
\smallskip

As for example, consider the linear operator $T$ defined on $ \ell_\infty({\mathbb{R}^2}) $ as $ T(1,0) = (1,0) $ and $ T(0,1) = (0,0).$ Then it easy to check that $ M_T = \{ (a,b) ~:~ \mid a \mid = 1, \mid b \mid \leq 1 \} $ and $m_T = \{\pm (0,1) \}.$ Clearly, $ M_T \bot_B m_T $ but $ m_T \not\perp_B M_T.$
\end{remark}

If the dimension of $ \mathbb{X} $ is strictly greater than $ 2, $ then we have the following characterization of Euclidean spaces.

\begin{theorem}\label{theorem:ips}
	Let $ \mathbb{X} $ be a finite-dimensional real Banach space having dimension strictly greater than $ 2. $ Then  $\mathbb{X}$ is an Euclidean space if and only if for any $T \in \mathbb{L}(\mathbb{X})$, either (a) or (b) holds:\\
	(a) $M_T = m_T = S_{\mathbb{X}}$. \\
	(b) $M_T \perp_B m_T$ and $m_T \perp_B M_T.$
\end{theorem}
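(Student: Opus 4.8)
The plan is to reduce the higher-dimensional statement to the two-dimensional case already handled in Theorem \ref{theorem:ips dim 2}, using the classical principle that a finite-dimensional normed space of dimension $\geq 3$ is Euclidean provided every (or every sufficiently rich family of) two-dimensional subspace is an inner product space. So the necessary part comes for free: if $\mathbb{X}$ is Euclidean and $T\in\mathbb{L}(\mathbb{X})$, then either $T$ is a scalar multiple of an isometry, giving $M_T=m_T=S_{\mathbb{X}}$, or it is not, and Theorem \ref{theorem:subset} (together with its symmetric counterpart, since in a Hilbert space $m_T\subseteq(M_T)^\perp$ forces $M_T\subseteq(m_T)^\perp$ as well) gives $M_T\perp_B m_T$ and $m_T\perp_B M_T$. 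The real content is the sufficient part.

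For the sufficient part, I would fix an arbitrary two-dimensional subspace $\mathbb{V}$ of $\mathbb{X}$ and aim to show $\mathbb{V}$ is an inner product space; then, since $\dim\mathbb{X}>2$ and every two-dimensional subspace is an inner product space, a standard result (e.g.\ the Jordan--von Neumann / Kakutani-type characterization) yields that $\mathbb{X}$ is Euclidean. To analyze $\mathbb{V}$, the natural device is to take a linear operator $T$ on $\mathbb{X}$ that is built from the geometry of $\mathbb{V}$: choose $T$ to be the identity on $\mathbb{V}$ (or, if one needs the ``$m_T$'' part to be informative, a rank-two operator that is an isometry on $\mathbb{V}$ and contracts the complementary directions), and then read off $M_T$ and $m_T$ in terms of $S_{\mathbb{V}}$ and the normal directions to $\mathbb{V}$. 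The hypothesis that (a) or (b) holds for this $T$ then translates into a Birkhoff-James orthogonality relation inside $\mathbb{V}$, at which point one applies the two-dimensional theorem to conclude $\mathbb{V}$ is an inner product space. One must be careful to ensure strict convexity is not silently needed: here the advantage of $\dim\mathbb{X}>2$ (as remarked after Theorem \ref{theorem:ips dim 2}) is presumably that one can avoid the strict-convexity hypothesis required in dimension $2$, likely because having a third dimension lets one detect any line segment on $S_{\mathbb{V}}$ by comparing with a neighboring two-plane, or because condition (b) applied to suitable operators already rules out flat pieces.

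Concretely, the key steps in order: (1) dispose of the necessary part via Theorem \ref{theorem:subset} and the Hilbert-space symmetry of the $M_T\text{--}m_T$ orthogonality; (2) fix a two-dimensional subspace $\mathbb{V}\subseteq\mathbb{X}$ and, for an arbitrary pair $x,y\in S_{\mathbb{V}}$, construct an operator $T\in\mathbb{L}(\mathbb{X})$ whose norm attainment set is (a scaled copy of) a prescribed piece of $S_{\mathbb{V}}$ and whose minimum norm attainment set is a prescribed complementary piece --- for instance, a rank one or rank two operator as in Theorem \ref{theorem:rank1} --- so that the dichotomy (a)/(b) for $T$ encodes a Birkhoff-James orthogonality statement relating $x$, $y$, and the normals; (3) show that since $M_T$ cannot fill $S_{\mathbb{X}}$ for such non-isometric $T$, case (b) must hold, hence the orthogonality relations $M_T\perp_B m_T$ and $m_T\perp_B M_T$ hold inside $\mathbb{X}$, and restrict these to $\mathbb{V}$; (4) invoke Theorem \ref{theorem:ips dim 2} to conclude $\mathbb{V}$ is an inner product space --- or, if the restriction to $\mathbb{V}$ is not literally of the form covered there, argue directly that symmetry of Birkhoff-James orthogonality holds on $\mathbb{V}$ and cite \cite[Th. 2.2]{SP} or the two-dimensional result; (5) conclude by the classical theorem that a normed space of dimension $\geq 3$ all of whose $2$-dimensional subspaces are inner product spaces is itself an inner product space, hence (being finite-dimensional and real) Euclidean.

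The main obstacle I anticipate is step (2)--(3): manufacturing, for each two-dimensional subspace $\mathbb{V}$ and each candidate pair of vectors in it, an operator $T$ on the full space $\mathbb{X}$ whose $M_T$ and $m_T$ are controlled precisely enough that the dichotomy forces the desired orthogonality, while simultaneously ruling out that $T$ is a scalar multiple of an isometry (so that case (a) is genuinely excluded). This requires a careful choice of how $T$ acts on a complement of $\mathbb{V}$ and a verification --- via Lemma \ref{lemma:hyperplane1}, Theorem \ref{theorem:preserve}, and the rank-one analysis of Theorem \ref{theorem:rank1} --- that $M_T$ and $m_T$ are exactly what we want; getting the ``$m_T\perp_B M_T$'' direction (the non-automatic one in a general Banach space) to bite on $\mathbb{V}$ is the delicate point. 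The parenthetical remark in the paper that dimension $2$ needs extra strict convexity but dimension $\geq 3$ does not suggests the authors exploit the extra room to bypass that gap, and reproducing that trick cleanly is where the proof will demand the most care.
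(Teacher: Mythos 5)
There is a genuine gap, and it sits exactly where you anticipated: steps (2)--(3) are never carried out, and the reduction strategy itself is problematic. Your plan is to fix a two-dimensional subspace $\mathbb{V}\subseteq\mathbb{X}$, transfer the hypothesis to operators on $\mathbb{V}$, and invoke Theorem \ref{theorem:ips dim 2}. But that theorem requires $\mathbb{V}$ to be strictly convex, which an arbitrary two-dimensional subspace of $\mathbb{X}$ need not be a priori; you only speculate that the third dimension ``presumably'' lets one bypass this, without an argument. Your fallback --- ``argue directly that symmetry of Birkhoff-James orthogonality holds on $\mathbb{V}$'' and conclude $\mathbb{V}$ is an inner product space --- does not work either: in dimension two, symmetry of Birkhoff-James orthogonality does \emph{not} characterize inner product spaces (the paper's own hexagon example has symmetric Birkhoff-James orthogonality but is not an inner product space), which is precisely why the two-dimensional theorem needs strict convexity and a different argument via connectedness of $M_T$ and \cite[Th. 2.2]{SP}. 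Finally, the hypothesis quantifies over operators on all of $\mathbb{X}$, with $M_T$ and $m_T$ global subsets of $S_{\mathbb{X}}$; you never explain how to build, for a prescribed operator on $\mathbb{V}$, an extension to $\mathbb{X}$ whose global norm and minimum-norm attainment sets are controlled, which is the delicate point you flag but do not resolve.

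The paper avoids all of this by not passing through two-dimensional subspaces at all. Its sufficient part proves symmetry of Birkhoff-James orthogonality on the whole space $\mathbb{X}$: given $x\perp_B y$ with $x,y\in S_{\mathbb{X}}$, choose a hyperplane $H$ containing $y$ with $x\perp_B H$, and define $T(\alpha x+h)=\alpha x$ for $h\in H$. Since $x\perp_B H$, one checks $\|T\|=1$, $x\in M_T$, $Ty=0$ so $y\in m_T$, and $M_T\neq S_{\mathbb{X}}$; hence alternative (b) must hold, and $m_T\perp_B M_T$ yields $y\perp_B x$. As $x\perp_B y$ was arbitrary, Birkhoff-James orthogonality is symmetric, and James's theorem \cite{J} (valid for dimension strictly greater than $2$, with no strict convexity assumption) gives that $\mathbb{X}$ is an inner product space, hence Euclidean. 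This is the ``trick'' you were looking for: in dimension at least three one can quote James's symmetry characterization directly on $\mathbb{X}$, so the two-dimensional theorem and its strict-convexity hypothesis are never needed. Your necessary part is essentially correct and matches the paper, but as it stands the sufficient part of your proposal is a plan with its central construction missing rather than a proof.
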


\begin{proof}
	We note that the proof of the necessary part of the theorem follows similarly as that of the necessary part of Theorem \ref{theorem:ips dim 2}. Let us prove the sufficient part.  We claim  that Birkhoff-James orthogonality is symmetric in $ \mathbb{X}. $ Let $x, y \in S_{\mathbb{X}}$ be such that $x\perp_B y$. Then there exists a hyperplane $H$  containing $y$ such that $x\perp_B H$. Clearly, any element $z\in \mathbb{X}$ can be written as $z=\alpha x + h$, where $\alpha \in \mathbb{R},~ h\in H.$ Define a linear operator $T$ on $\mathbb{X}$ as follows:
	\[T(\alpha x + h) = \alpha x,~ \text{for each}~ \alpha \in \mathbb{R}~ \text{and for each}~ h\in H.\]
	Clearly, $T$ is well-defined, linear and bounded. Since $x\perp_B H$, it is easy to check that $x \in M_T$ and $ y\in m_T.$ Clearly, $M_T\neq S_{\mathbb{X}}$. Therefore, $(b)$ holds. So, we have, $y\perp_B x$. Since $x, y \in S_{\mathbb{X}}$ such that $x\perp_B y$ was chosen arbitrarily, it follows from the homogeneity property of Birkhoff- James orthogonality that  Birkhoff- James orthogonality is symmetric in $\mathbb{X}.$ Since the dimension of $ \mathbb{X} $ is strictly greater than $ 2, $ it follows from \cite{J} that $\mathbb{X}$ is an inner product space. This establishes the theorem.
	
\end{proof}


\begin{thebibliography}{99}
	
	\bibitem{B} Birkhoff, G.,  \textit{Orthogonality in linear metric spaces}, Duke Math. J., \textbf{1} (1935), 169-172.
	
	\bibitem{C} Carvajal, X. and Neves, W.,  \textit{Operators that achieve the norm}, Integr. Equ. Oper. Theory, \textbf{72} (2012), 179-195.
	
	\bibitem{Ca} Carvajal, X. and Neves, W.,  \textit{Operators that attain their minima}, Bull. Braz. Math. Soc., (N. S.) \textbf{45}(2) (2014), 293-312.	
	
	\bibitem{G} Giles, J. R., \textit{Classes of semi-inner-product spaces}, Trans. Amer. Math. Soc., \textbf{129} (1967), 436-446.
	
	\bibitem{J} James, R.C., \textit{Inner products in normed linear spaces}, Bull. Amer. Math. Soc., \textbf{53}(6) (1947), 559-566.
	
	\bibitem{Ja} James, R.C., \textit{Orthogonality and linear functionals in normed linear spaces}, Trans. Amer. Math. Soc., \textbf{61} (1947), 265-292.
	
	\bibitem{Jam} James, R.C., \textit{Reflexivity and the sup of linear functionals}, Israel J. Math., \textbf{13} (1972), 289-300(1973).
	
	\bibitem{L} Lumer, G., \textit{Semi-inner-product spaces}, Trans. Amer. Math. Soc., \textbf{100} (1961), 29-43.
	
	\bibitem{PSMM} Paul, K., Sain, D., Mal, A. and Mandal, K., \textit{Orthogonality of bounded linear operators on complex Banach spaces}, Adv. Oper. Theory 3 (2018), no. 3, 699-709.
	
	\bibitem{Sd} Sain, D., \textit{Birkhoff-James orthogonality of linear operators on finite dimensional Banach spaces}, J. Math. Anal. Appl., 447 (2017), 860-866.
	
	\bibitem{S} Sain, D., \textit{On the norm attainment set of a bounded linear operator}, J. Math. Anal. Appl., 457 (2018), 67-76.
	
	\bibitem{Sa} Sain, D., \textit{On extreme contractions and the norm attainment set of a bounded linear operator}, arXiv:1708.07333v1 [math.FA].
	
	\bibitem{Sb} Sain, D., \textit{On the norm attainment set of a bounded linear operator and semi-inner-products in normed spaces}, arXiv:1802.10439v2 [math.FA].	

 \bibitem{SP} Sain, D. and Paul, K., \textit{Operator norm attainment and inner product spaces}, Linear Algebra  Appl., \textbf{439} (2013), 2448-2452.
	
	
	
	
\end{thebibliography}
\end{document}